\newtheorem{thm}{Theorem}[section]
\newtheorem{lem}[thm]{Lemma}
\theoremstyle{definition}
\theoremstyle{remark}
\newtheorem{rem}[thm]{Remark}
\numberwithin{equation}{section}
\begin{document}
	
	\title[]
	{Yamabe flow and locally conformally flat manifolds with positive pinched Ricci curvature}
	
	\author{Liang Cheng}

	
	\subjclass[2020]{Primary 53E99; Secondary 		53C20 .}

	\thanks{Liang Cheng's  Research partially supported by National
		Natural Science Foundation of China 12171180
	}
	
	\address{School of Mathematics and Statistics $\&$ Hubei Key Laboratory of Mathematical Sciences, Central  China Normal University, Wuhan, 430079, P.R.China}
	
	\email{chengliang@ccnu.edu.cn }

	\maketitle
	
	\begin{abstract}
		
	By using the Yamabe flow, we prove that if $(M^n,g)$, $n\geq
		3$, is an $n$-dimensional locally conformally flat complete Riemannian manifold  	satisfying $Rc\geq \epsilon Rg>0$,
		where  $\epsilon>0$ is a uniformly
		constant, then $M^n$ must be compact. Our result shows that Hamilton's pinching conjecture also holds for higher dimensional case if we assume additionally the metric is locally conformally flat.

		{ \textbf{Keywords}:  Yamabe flow, Myers-type theorem, locally
			conformally flat manifolds; Pinched Ricci curvature}
	\end{abstract}

	\section{Introduction}
Bonnet-Myers' theorem is one of the classical theorems in Riemannian
	geometry which states that if $M^n$ is a complete Riemannian
	manifold with its Ricci curvature satisfying $Rc\geq k>0$, then
	$M^n$ must be compact and $diam(M)\leq \frac{\pi}{\sqrt{k}}$.
	It is  interesting to seek
 other conditions on curvatures to get the compactness for
	manifolds. For this direction, there was  an interesting pinching conjecture by Hamilton (\cite{RFV2}, Conjecture 3.39):
\textit{
	If $(M^3,g)$, $n\geq 3$, is a 3-dimensional complete Riemannian
	manifold satisfying $Rc\geq \epsilon
	Rg>0$ for some $\epsilon>0$, where $R$ is the scalar curvature of $(M^3,g)$, then $M^3$ must be compact.}
Chen and Zhu \cite{CZ2000} showed the conjecture is true that
if one assumes additionally that $(M^3,g)$ have the bounded nonnegative sectional curvature. Later, Lott \cite{L} proved the conjecture under the weaker additional assumptions that the sectional
curvature is bounded and has an inverse quadratic lower bound.  Subsequently,
 Deruelle-Schulze-Simon \cite{DSS} proved the conjecture under the additional hypotheses to only require additionally
that the sectional curvature is bounded. 
 Lee and Topping \cite{LP} recently removed the bounded curvature assumption of  Deruelle-Schulze-Simon's result and  solved this conjecture completely. 	Noted that
all the above results obtained by using the Ricci flow.

	In this paper we use Yamabe flow to show that the higher dimensional case of
	Hamilton's conjecture also holds if we assume additionally the metric is
	locally conformally flat.
		\begin{thm}\label{main}
		If $(M^n,g)$, $n\geq 3$, is an $n$-dimensional complete locally  conformally flat Riemannian
		manifold
		satisfying
		\begin{equation}\label{key_pinching}
				Rc\geq \epsilon Rg>0
		\end{equation}
 for some $\epsilon> 0$, then $M^n$ must be compact.
	\end{thm}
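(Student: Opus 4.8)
The plan is to argue by contradiction, running the Yamabe flow from $g$ and exploiting the fact that on a locally conformally flat manifold the flow stays inside a conformal class whose full curvature operator is governed entirely by the Schouten (equivalently Ricci) tensor.

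\textbf{Reductions.} Tracing \eqref{key_pinching} gives $\epsilon\le\tfrac1n$, with equality forcing $Rc=\tfrac{R}{n}g$; a locally conformally flat Einstein metric has constant sectional curvature, so in the equality case $(M^n,g)$ is a spherical space form and we are done. Thus assume $\epsilon<\tfrac1n$ and, for contradiction, that $M^n$ is non-compact. Passing to the universal cover (completeness, local conformal flatness and \eqref{key_pinching} all lift, and compactness of the cover would force $M^n$ compact, while non-compactness of $M^n$ lifts) we may assume $M^n$ is simply connected; being complete, simply connected, locally conformally flat and non-compact with $Rc>0$, it is then conformally diffeomorphic to $\R^n$ (the round sphere is excluded by non-compactness, the cylinder $\R\times S^{n-1}$ by $Rc>0$), so $g=u^{\frac{4}{n-2}}\delta$ on $\R^n$ for some positive $u$.

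\textbf{The flow and its invariants.} Run the unnormalized Yamabe flow $\partial_t g=-Rg$ with $g(0)=g$. Since it preserves the conformal class it preserves local conformal flatness, and in the coordinates above it is the fast-diffusion equation $\partial_t\big(u^{\frac{n+2}{n-2}}\big)=\tfrac{(n-1)(n+2)}{n-2}\,\Delta u$, so a complete solution exists on a maximal interval $[0,T)$ with $0<T\le\infty$. Two facts drive the argument. First, $\partial_t R=(n-1)\Delta R+R^2$, whence $R(\cdot,t)>0$ for all $t$. Second, because the Weyl tensor vanishes the full curvature is the Kulkarni--Nomizu product of the Schouten tensor with $g$, the reaction term in the evolution of the Schouten tensor is explicit, and the cone $\{\,Rc\ge\epsilon Rg\ge 0\,\}$ is invariant under the associated ODE system; hence, by a maximum principle adapted to this complete setting, the pinching $Rc\ge\epsilon Rg>0$ is preserved along $g(t)$.

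\textbf{The contradiction.} Set $\rho(t):=\inf_{M}R(\cdot,t)$; by the maximum principle applied to $\partial_t R=(n-1)\Delta R+R^2$, $\rho$ is non-decreasing. If $\rho(t_0)>0$ for some $t_0\in(0,T)$, then at time $t_0$ we have $Rc\ge\epsilon Rg\ge\epsilon\rho(t_0)g>0$ uniformly on $M$, so Bonnet--Myers forces $M$ compact --- a contradiction. If instead $\rho\equiv 0$ on $[0,T)$, then the fast-diffusion structure together with the preserved pinching forces $T<\infty$ and, rescaling parabolically about a sequence of points where $R$ is comparable to its spatial supremum as $t\uparrow T$, one extracts a complete, non-compact, locally conformally flat gradient shrinking Yamabe soliton carrying $Rc\ge\epsilon Rg\ge 0$; the classification of complete locally conformally flat gradient Yamabe solitons (they are rotationally symmetric, and the strict pinching singles out the round sphere) rules this out. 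In either case the assumption that $M^n$ is non-compact is untenable.

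\textbf{Main obstacle.} The delicate points are analytic: setting up the Yamabe flow starting from a complete, possibly unbounded-curvature locally conformally flat metric, and producing the local a priori curvature and derivative estimates needed to run the non-compact maximum principles (both for preserving \eqref{key_pinching} and for the implication $\rho>0\Rightarrow M$ compact); and the singularity analysis when $\rho\equiv 0$, where one must either perform the soliton extraction and invoke its classification or, alternatively, show directly from \eqref{key_pinching} and the fast-diffusion equation that $\inf_M R$ cannot remain $0$. I expect this last dichotomy --- ruling out the scenario $\inf_M R\equiv 0$ --- to be the crux of the argument.
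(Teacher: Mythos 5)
Your outline shares the paper's broad strategy (run the Yamabe flow, preserve the pinching, reach a contradiction through a soliton classification), but there are two genuine gaps where the paper does something different and where your argument as written would not go through.

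First, the existence of a complete smooth Yamabe flow out of $g_0$ is not a soft fact here: $g_0$ is complete but has no curvature bound, and the fast-diffusion equation on $\mathbb{R}^n$ does not automatically produce a flow whose metrics stay complete and admit the derivative estimates needed for the non-compact maximum principles you invoke (both for $\partial_t R = (n-1)\Delta R + R^2$ and for the invariance of the pinching cone). This is the entire technical content of the paper: Theorem~\ref{main1} constructs a \emph{local} Yamabe flow on balls $B_{g_0}(p,r)$ via an induction that alternates Hochard's conformal extension (Theorem~\ref{Ho}) with short-time complete flows (Theorem~\ref{short_existence}), proves a local $|Rm|\le Q/t$ estimate (Lemma~\ref{key_lemma}) by a blow-up and lifting-by-exponential-map argument, and keeps the pinching alive through a cut-off built from the new distance-distortion estimates (\ref{distance_est_7})--(\ref{distance_est_3}), which crucially use that Yamabe flow is \emph{not} super-Ricci. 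Your "Main obstacle" paragraph correctly flags that this is where the difficulty lies, but the proposal does not supply the missing construction, and without the $Q/t$ bound the pinching preservation and even the positivity of $R$ are not available.

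Second, your final dichotomy is mis-calibrated. You claim that if $\rho(t)=\inf_M R(\cdot,t)\equiv 0$ then $T<\infty$ and a complete non-compact gradient \emph{shrinking} Yamabe soliton appears. In fact, the paper's construction yields a Type~III solution on $M\times[0,\infty)$ with $t\,|Rm(g(t))|\le Q$, so $T=\infty$ even though $\inf_M R$ never becomes positive. The contradiction is instead obtained on the long-time side: Chow's Harnack inequality gives $\partial_t(tR)\ge 0$, one takes a sequence $(x_i,t_i)$ with $t_i\to\infty$ realizing $\limsup tM(t)$, rescales, and extracts a gradient \emph{expanding} Yamabe soliton (after handling the lack of injectivity radius bound by passing to exponential charts and then recovering an injectivity bound à la Chau--Tam), which contradicts the classification under the pinching. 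So the claim "$\rho\equiv 0\Rightarrow T<\infty$" is both unjustified and, in this setting, false; the route through expanders at $t\to\infty$ rather than shrinkers at a finite-time singularity is essential. (The reduction to $\mathbb{R}^n$ and the $\rho(t_0)>0\Rightarrow$ Bonnet--Myers step are fine, and also appear implicitly in the paper once $R>0$ is established.)
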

	\begin{rem}
		By applying the strong maximum principle to the evolution equation of scalar curvature for Yamabe flow, Theorem \ref{main} is equivalent to
		say if $(M^n,g)$, $n\geq 3$, is an $n$-dimensional complete noncompact
		locally comformally flat manifold satisfying
		$Rc\geq \epsilon Rg$ and $R\geq 0$, then $M^n$ must be flat; see (\ref{scalar}) in Theorem \ref{chow_formula}.
	\end{rem}

	The Yamabe flow was proposed by
R.Hamilton \cite{H89} in the 1980's as a tool for constructing
metrics of constant scalar curvature in a given conformal class.
The Yamabe flow is defined by the
	evolution equation
	\begin{equation}\label{yamabe}
		\left\{
		\begin{array}{ll}
			\frac{\partial g}{\partial t}=-Rg \quad &\text{in}\ M^n\times[0,T),\\
			g(\cdot,0)=g_0 &\text{in}\ M^n,
		\end{array}
		\right.
	\end{equation}
	on an $n$-dimensional complete Riemannian manifold $(M^n,g_0)$, $n\geq 3$, where $g(t)$ is a family
	of Riemannian metrics in the conformal class of $g_0$ and $R$ is the
	scalar curvature of the metric.
	If we write $g(t)=u(t)^{\frac{4}{n-2}}g_0$ with $u$ being a positive smooth
	function on $M$ and change time by a constant scale, then  
	 (\ref{yamabe}) is equivalent to the following heat type
	equation
	\begin{equation}\label{yamabe_flow_u}
		\left\{
		\begin{array}{ll}
			\frac{\partial u^N}{\partial t}=L_{g_0}u, \quad &\text{in}\ M^n\times[0,T),\\
			u(\cdot,0)=1, &\text{in}\ M^n,
		\end{array}
		\right.
	\end{equation}
	where $N=\frac{n+2}{n-2}$, $L_{g_0}u=\Delta_{g_0}u-aR_{g_0}u$ and
	$a=\frac{n-2}{4(n-1)}$.
 The asymptotic 
behaviour of Yamabe flow on compact manifolds was analysed by Chow \cite{chow}, Ye \cite{Y94}, Schwetlick and Struwe \cite{SS}
and Brendle \cite{BS2005}, \cite{BS2007}. 	For the theory of Yamabe flow on noncompact manifolds, we can see \cite{DS}\cite{DKS1}\cite{DKS2}\cite{Ma19}\cite{Ma21} and references therein for more information.

Theorem \ref{main} improves the result obtained Gu \cite{Gu} with an additional hypotheses that $(M^n,g)$ has the bounded non-negative sectional curvature and  by Ma and the author \cite{MC} with an additional hypotheses that $(M^n,g)$ has the bounded curvature. The main progress of this paper is that we prove the
 following theorem:
	\begin{thm}\label{main_key}
	For all $\epsilon>0$, there exists $ Q(\epsilon,n)>0$ such that the following holds. Let $\left(M^n, g_0\right)$ be an $n$-dimensional locally conformally flat complete noncompact manifold such that
	\begin{equation}\label{pinching_cond_intro}
		Rc\left(g_0\right) \geq \epsilon  R\left(g_0\right)g_0> 0
	\end{equation}
	on $M^n$.		
	Then there exists a smoothly locally conformally flat  Yamabe flow solution $g(t)$ defined on $M^n \times[0, +\infty)$ such that $g(0)=g_0$,
	$$
	| Rm(g(x, t))| \leq \frac{Q}{t}
	$$ and
	$$
	Rc(g(x, t)) \geq \epsilon  R(g(x, t))g(x,t)>0
	$$
	for all $(x, t) \in M^n \times[0, +\infty)$, where $Rm$ denotes the Riemannian curvature tensor.
\end{thm}

 Hamilton\cite{H89} proved the local existence of Yamabe flows on compact manifolds without boundary. For the complete noncompact manifolds with bounded scalar curvature, local existence of Yamabe flows  was obtained by An-Ma \cite{AM}  and Chen-Zhu \cite{CZ}. However, there exists no a general existence theorem for the noncompact Yamabe flow without bounded curvature.  Theorem \ref{main_key} shows that the Yamabe flow has a smooth solution such that its curvature becomes bounded as $t>0$ and preserves the pinching condition (\ref{key_pinching}) if its initial metric satisfying the assumptions of Theorem \ref{main}.

We sketch our strategy for  the proof of  Theorem \ref{main}. In order to prove Theorem \ref{main_key}, we 
need to construct the local Yamabe flow for the local ball for which the existence time of local Yamabe flow  is uniform and dependent only  on $\epsilon$ and $n$; see Theorem \ref{main_key}. 
In order to construct such local Yamabe flow, we use the following inductive method which was used by Lee and Topping \cite{LP} for the Ricci flow:
the process
starts by doing a conformal change to the initial metric, making it a complete metric with bounded curvature and
leaving it unchanged on a smaller region, and then run a complete Yamabe flow up to a
short time.
Next we do the conformal change to the metric  again and
repeating the
process. 
 In contrast with the Ricci flow,
 the main problem for us is that the Yamabe flow is NOT a super Ricci flow, i.e. the Yamabe flow do not satisfies
 $
\frac{\partial g}{\partial t}\ge -2Rc
 $
for which has the nice estimates for distance
\begin{equation*}
\left.\left(\frac{\partial}{\partial t}-\Delta_{g(t)}\right) d_{g(t)}\left(x, x_0\right)\right|_{t=t_0} \geq-(n-1)\left(\frac{2}{3} K r_0+\frac{1}{r_0}\right)
\end{equation*}
if $Rc(g\left(t_0\right)) \leq(n-1) K $ on  $B_{g\left(t_0\right)}\left(x_0, r_0\right)\cup B_{g\left(t_0\right)}\left(x, r_0\right)$ and $d_g(x,x_0)>2r_0$; See Lemma 8.3 in \cite{P}. This can be used to construct a good cut-off function which is crucial to prove the pinching condition is preserved for the local Ricci flow; see \cite{LP}.
In order to overcome this problem,
the key observation for us is that if the Yamabe flow satisfying the following the pinching condition which we can assume
it holds by the inductive arguments
\begin{equation}\label{jst}
	Rc(g\left(t_0\right))\ge  \left(\epsilon R(g\left(t_0\right))-\lambda\right)g\left(t_0\right)
\end{equation}
for $t_0\in [0,t_k]$, 
we have the following estimate for  distance
	\begin{equation}\label{good}
		\left.\quad\left(\frac{\partial}{\partial t}-(n-1)\Delta_{g(t)}\right) d_{g(t)}\left(x, x_0\right)\right|_{t=t_0} \geq- \frac{2(n-1)}{\epsilon}\left(K r_0+\frac{1}{r_0} \right)-\frac{\lambda}{\epsilon}d_{g(t_0)}\left(x, x_0\right)
	\end{equation}
if $Rc(g\left(t_0\right)) \leq(n-1) K $ on  $B_{g\left(t_0\right)}\left(x_0, r_0\right)\cup B_{g\left(t_0\right)}\left(x, r_0\right)$ and $d_g(x,x_0)>2r_0$; see Theorem \ref{cut_off_1}.  
Combining with 
 Theorem \ref{cut_off_2}, We can also use these to construct a good cut-off function to show
the pinching condition is preserved under local Yamabe flow on $[0,t_{k+1}]$; see the proof of Theorem \ref{main1}.  

The present paper is organized as follows. In section 2 we recall some basic results and the short-time existence of Yamabe flow on complete manifolds.  In section 3  we do estimates for changing distances under the Yamabe flow, espcially assuming the pinching condition. In section 4 we prove a local $\frac{c}{t}$ estimate under the Yamabe flow satisfying the pinching condition. In section 5 we get an  existence theorem
for the local Yamabe flow. 
In section 6 we give the proof of Theorem \ref{main} and Theorem \ref{main_key}. In the appendix we give the proof of a maximum principle theorem which we use in setion 3.

	\section{preliminaries}
	
	We first recall some basic evolution equations of Yamabe flow obtained by Chow \cite{chow}.
	
	\begin{lem}[Lemma 2.2 and Lemma
		2.4 in \cite{chow}] \label{chow_formula} If $(M^n,g(t))$, $n\geq 3$, is the solution to Yamabe flow (\ref{yamabe}) on
		an $n$-dimensional  Riemannian manifold, then the scalar curvature evolves as
		\begin{equation}\label{scalar}
			R_t=(n-1)\Delta R+R^2.
		 \end{equation}
		Moreover, if $(M^n,g(0))$ is locally  conformally flat, then the Ricci curvature evolves as
		\begin{equation}\label{Rc}
			\partial_t R_{ij}=(n-1)\Delta R_{ij}+\frac{1}{n-2}B_{ij},
		\end{equation}
		where
		$$
		B_{ij}=(n-1)|Rc|^2g_{ij}+nRR_{ij}-n(n-1)R_{ij}^2-R^2g_{ij}.
		$$
	\end{lem}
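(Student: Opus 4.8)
The plan is to treat the Yamabe flow as the conformal deformation $\partial_t g_{ij} = -R\,g_{ij}$ and to feed $v_{ij} := -R\,g_{ij}$ into the standard first–variation formulas of Riemannian geometry; the locally conformally flat hypothesis is needed only to close the Ricci equation. For the scalar curvature one uses that under any deformation $\partial_t g_{ij} = v_{ij}$ one has $\partial_t R = -\Delta(\operatorname{tr}_g v) + \operatorname{div}\operatorname{div}v - \langle Rc, v\rangle$; with $v_{ij} = h\,g_{ij}$ and $h = -R$ this reduces (using $\operatorname{tr}_g v = nh$, $\operatorname{div}\operatorname{div}(h\,g) = \Delta h$, $\langle Rc, h\,g\rangle = hR$) to $\partial_t R = -(n-1)\Delta h - hR = (n-1)\Delta R + R^2$, which is (\ref{scalar}) and needs no curvature assumption.

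For the Ricci tensor I would start from the first–variation formula $\partial_t R_{ij} = -\tfrac12\Delta_L v_{ij} - \tfrac12\nabla_i\nabla_j(\operatorname{tr}_g v) + \tfrac12\bigl(\nabla_i(\operatorname{div}v)_j + \nabla_j(\operatorname{div}v)_i\bigr)$, where $\Delta_L$ is the Lichnerowicz Laplacian and $(\operatorname{div}v)_j = \nabla^p v_{pj}$. With $v_{ij} = h\,g_{ij}$ one has $\operatorname{tr}_g v = nh$, $(\operatorname{div}v)_j = \nabla_j h$, and, crucially, $\Delta_L(h\,g)_{ij} = (\Delta h)\,g_{ij}$, since the curvature terms of $\Delta_L(h\,g)_{ij}$ cancel. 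Hence $\partial_t R_{ij} = -\tfrac12(\Delta h)\,g_{ij} - \tfrac{n-2}{2}\nabla_i\nabla_j h$, i.e., with $h = -R$, the curvature-free identity $\partial_t R_{ij} = \tfrac12(\Delta R)\,g_{ij} + \tfrac{n-2}{2}\nabla_i\nabla_j R$, valid for the Yamabe flow on any manifold. It remains to prove that on a locally conformally flat metric this equals $(n-1)\Delta R_{ij} + \tfrac{1}{n-2}B_{ij}$. Here I would use that local conformal flatness is preserved along the conformal Yamabe flow, so that at each time the Weyl tensor vanishes — giving the explicit formula for $R_{ikjl}$ in terms of $R_{ij}$, $R$, $g_{ij}$ — and the Cotton tensor vanishes, equivalently the Schouten tensor $A_{ij} = \tfrac{1}{n-2}\bigl(R_{ij} - \tfrac{R}{2(n-1)}g_{ij}\bigr)$ is a Codazzi tensor, $\nabla_k A_{ij} = \nabla_i A_{kj}$. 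Tracing Codazzi gives $\nabla^j A_{ij} = \tfrac{1}{2(n-1)}\nabla_i R$; taking one more divergence of the Codazzi identity and commuting covariant derivatives — evaluating the resulting contractions of the Riemann tensor against the Schouten tensor by the Weyl-free formula — produces a Bochner-type identity $\Delta R_{ij} = \tfrac{n-2}{2(n-1)}\nabla_i\nabla_j R + \tfrac{1}{2(n-1)}(\Delta R)\,g_{ij} + Q_{ij}$, with $Q_{ij}$ an explicit combination of $|Rc|^2 g_{ij}$, $R_{ik}R^k_j$, $R\,R_{ij}$, $R^2 g_{ij}$. Substituting this into the curvature-free identity and collecting terms gives (\ref{Rc}); as a consistency check, $g^{ij}B_{ij} = 0$ and $\tfrac n2\Delta R + \tfrac{n-2}{2}\Delta R = (n-1)\Delta R$, so tracing (\ref{Rc}) is an identity, which in particular forces $Q_{ij}$ to be proportional to $B_{ij}$.

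The main obstacle is this last step: obtaining the Bochner-type identity for the rough Laplacian $\Delta R_{ij}$ on a locally conformally flat manifold and then the bookkeeping that verifies the dimension-dependent constants in the four quadratic curvature terms assemble exactly into $\tfrac{1}{n-2}B_{ij}$ — here the curvature sign conventions and the Weyl-free formula must be tracked carefully. A computationally comparable alternative is to work on a conformally flat chart, write $g = u^{4/(n-2)}g_0$ with $g_0$ flat, use the heat-type equation $\partial_t u^N = L_{g_0}u$ together with the conformal transformation law of $Rc$ in terms of $u$, and differentiate in $t$; this sidesteps the Bochner identity at the cost of carrying the $u$-factors throughout. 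Since the statement is precisely Lemma 2.2 and Lemma 2.4 of \cite{chow}, one may of course simply invoke that reference.
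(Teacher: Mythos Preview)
The paper provides no proof of this lemma at all; it simply attributes the result to Chow \cite{chow} (as Lemma 2.2 and Lemma 2.4 there) and moves on. Your final sentence---that one may simply invoke that reference---is exactly what the paper does.

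Your sketch is nonetheless a correct outline of the actual derivation, and it is essentially the route taken in \cite{chow}: the scalar evolution drops out of the general first-variation formula for $R$ under a conformal deformation, while the Ricci evolution is obtained by first writing $\partial_t R_{ij}$ as a combination of $(\Delta R)g_{ij}$ and $\nabla_i\nabla_j R$ (valid on any manifold under the Yamabe flow) and then using the vanishing of the Weyl and Cotton tensors to express $\nabla_i\nabla_j R$ in terms of $\Delta R_{ij}$ plus quadratic curvature terms. The only place to be careful, as you note, is the bookkeeping in the Bochner identity for $\Delta R_{ij}$; the consistency check $g^{ij}B_{ij}=0$ you mention is indeed a useful safeguard. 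So your proposal goes well beyond the paper, which is content to cite the source.
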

\begin{rem}\label{shi_esimates}
		We can rewrite the equation
	(\ref{Rc}) for $Rc$ as
	$$
	\partial_t Rc = (n-1)\Delta Rc + Rc\ast Rc,
	$$
	where $Rc\ast Rc$ stands for any linear combination of tensors
	formed by contraction on $R_{ij}\cdot R_{kl}$. Notice that the
	evolution for $Rc$ along the Yamabe Flow for the locally conformally flat case has the same form as the
	evolution for $Rm$ along the Ricci flow. So Shi's techniques
	in \cite{S97} can be applied and we can show that
	all the covariant derivatives of $Rm$ are uniformly bounded on
	$[0,T)$ if $|Rc|$ is bounded on $[0,T)$ under the locally conformally flat Yamabe flow.
\end{rem}

	The following   Hochard's result  that allows us to conformally change  an incomplete Riemannian
	metric at its extremities in order to make it complete and without changing it in the interior.
	
	\begin{thm} [Corollaire IV.1.2 in \cite{Ho}]\label{Ho}
		There exists $\sigma(n)$ such that given a Riemannian manifold $\left(N^n, g\right)$ with $| Rm(g)| \leq \rho^{-2}$ throughout for some $\rho>0$, there exists a complete Riemannian metric $h$ on $N$ such that
		
		(1) $h \equiv g$ on $N_\rho:=\left\{x \in N: B_g(x, \rho) \Subset N\right\}$, and
		
		(2) $| Rm(h)| \leq \sigma \rho^{-2}$ throughout $N$.
	\end{thm}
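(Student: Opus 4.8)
The plan is to take $h$ to be a conformal deformation $h=e^{2f}g$ of $g$ that is trivial on $N_\rho$ and forces completeness by blowing up near the metric boundary of $(N,g)$, while using $|Rm(g)|\le\rho^{-2}$ to keep $|Rm(h)|$ under control; this incidentally produces an $h$ conformal to $g$, the feature one wants in order to keep the Yamabe flow in a fixed conformal class, although the statement asks only for completeness. Let $\bar N$ be the metric completion of $(N,g)$ and $\partial N:=\bar N\setminus N$; if $\partial N=\emptyset$ then $g$ is already complete and $h:=g$ works, so assume $\partial N\ne\emptyset$ and set $d:=\dist_g(\cdot,\partial N)$, a positive $1$-Lipschitz function on $N$ extending continuously to $\bar N$ with $d=0$ on $\partial N$. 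If $d(x)<\rho$, a $g$-Cauchy sequence in $B_g(x,\rho)$ converging in $\bar N$ to a point of $\partial N$ shows $B_g(x,\rho)$ is not relatively compact in $N$; hence $N_\rho\subseteq\{d\ge\rho\}$. Fix once and for all a smooth nonincreasing $F:(0,\infty)\to[0,\infty)$ with $F\equiv0$ on $[1,\infty)$ and $F(s)=\log(1/s)$ on $(0,\tfrac12]$. Let $\tilde d$ be a smooth positive function on $N$ with $\tfrac12 d\le\tilde d\le 2d$ everywhere and $|\nabla\tilde d|_g\le C(n)$, $|\nabla^2\tilde d|_g\le C(n)/\tilde d$ on $\{d<\rho\}$ (its existence is the crux, addressed last), put $f:=F(2\tilde d/\rho)$, and define $h:=e^{2f}g$. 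Since $f\equiv0$ on $\{\tilde d\ge\rho/2\}\supseteq\{d\ge\rho\}\supseteq N_\rho$, we have $h\equiv g$ there, which is (1).

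\textbf{Completeness.} As $f\ge0$ we have $h\ge g$ pointwise, so any curve $\gamma$ that eventually leaves every compact subset of $N$ accumulates on $\partial N$ and satisfies $\tilde d\to0$ along it; on the final portion, where $2\tilde d/\rho<\tfrac12$, its $h$-length dominates $\int e^{F(2\tilde d/\rho)}\,|(\tilde d\circ\gamma)'|=\tfrac{\rho}{2}\int |(\tilde d\circ\gamma)'|/(\tilde d\circ\gamma)=+\infty$. Thus every $h$-bounded subset of $N$ lies in a $g$-bounded set on which $d\ge c>0$; such a set is compact in $N$ (a sequence in it converges in $\bar N$ to a point where $d\ge c>0$, hence not on $\partial N$), so $(N,h)$ is complete by Hopf--Rinow.

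\textbf{Curvature bound.} Under $h=e^{2f}g$ one has $Rm(h)=e^{2f}\big(Rm(g)+P(g,\nabla f,\nabla^2 f)\big)$, where $P$ is a universal expression (depending only on $n$), namely the Kulkarni--Nomizu product of $g$ with $\nabla^2 f-df\otimes df+\tfrac12|\nabla f|_g^2\,g$, linear in $\nabla^2 f$ and quadratic in $\nabla f$; hence $|Rm(h)|_h\le C(n)\,e^{-2f}\big(|Rm(g)|_g+|\nabla^2 f|_g+|\nabla f|_g^2\big)$. On $\{f\ne0\}\subseteq\{d<\rho\}$, writing $s:=2\tilde d/\rho\in(0,1]$, we get $|\nabla f|_g\le C(n)\rho^{-1}|F'(s)|$ and $|\nabla^2 f|_g\le C(n)\rho^{-2}|F''(s)|+C(n)\rho^{-1}|F'(s)|\,|\nabla^2\tilde d|_g\le C(n)\rho^{-2}\big(|F''(s)|+s^{-1}|F'(s)|\big)$, using $|\nabla^2\tilde d|_g\le C(n)/\tilde d=2C(n)\rho^{-1}s^{-1}$. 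On $\{s<\tfrac12\}$, where $e^{-2F}=s^2$, $|F'|=s^{-1}$, $|F''|=s^{-2}$, each of $e^{-2f}|\nabla f|_g^2$, $e^{-2f}|\nabla^2 f|_g$, $e^{-2f}|Rm(g)|_g$ is $\le C(n)\rho^{-2}$; on $\{\tfrac12\le s\le1\}$ the quantities $F,F',F''$ are bounded by absolute constants, $e^{-2f}\le1$ and $\tilde d\ge\rho/4$, giving the same. Therefore $|Rm(h)|_h\le\sigma(n)\rho^{-2}$ on all of $N$, which is (2).

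\textbf{The main obstacle: constructing $\tilde d$.} Everything reduces to producing a smooth $\tilde d$ comparable to $d$ with the two-sided bound $|\nabla^2\tilde d|_g\le C(n)/\tilde d$, and this is where the real content lies -- it is exactly what makes the statement a theorem rather than an exercise. The difficulties are that $d$ is merely Lipschitz (so $e^{2f}g$ built directly from $d$ is not even smooth), that the Hessian comparison theorem bounds $\nabla^2 d$ only from above and only in the barrier sense away from the cut locus of $\partial N$, and that one has no a priori injectivity-radius lower bound with which to mollify in charts at scale $\sim d$. The resolution, due in this form to Hochard (in the spirit of Greene--Wu), is: first observe that $\dist_g(x,\cdot)$ is semiconcave and, on $\{\dist_g(x,\cdot)<\rho\}$, has semiconcavity constant $O(1/\dist)$ by comparison with $|Rm|\le\rho^{-2}$, so that $\dist_g(x,\cdot)^2$ has Hessian bounded by a dimensional constant on that region even past the cut locus; then a Riemannian inf-sup-convolution of $d$ against $\tfrac{1}{2\epsilon}\dist_g^2$ at the spatially varying scale $\epsilon(x)\sim d(x)$ yields a $C^{1,1}$ function $\tilde d_0$ with $|\tilde d_0-d|\le\epsilon$ and $|\nabla^2\tilde d_0|_g\le C(n)/d$ on both sides, with dimensional constants and no injectivity-radius input; finally one mollifies $\tilde d_0$ in local charts at a scale $\ll d$ and patches with a partition of unity, using $\sum\nabla\chi_\alpha=0$ and $\sum\nabla^2\chi_\alpha=0$ to keep the Hessian of the patched smooth function $O(1/d)$ independently of the (non-uniform) local scales. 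Carrying this out with genuinely uniform constants is the technical heart of the argument, and it completes the proof.
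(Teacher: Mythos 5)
The paper does not actually prove this statement: Theorem \ref{Ho} is quoted from Hochard's thesis (Corollaire IV.1.2 of \cite{Ho}) and used as a black box, so there is no proof in the paper to compare against. Your proposal reconstructs what is, as far as the published accounts go, essentially Hochard's own strategy: set $d=\dist_g(\cdot,\partial N)$, replace it by a smooth comparable function $\tilde d$ with $|\nabla\tilde d|_g\le C(n)$ and $|\nabla^2\tilde d|_g\le C(n)/\tilde d$, and take $h=e^{2F(2\tilde d/\rho)}g$ with $F(s)=\log(1/s)$ near $0$. The observation $N_\rho\subseteq\{d\ge\rho\}$, the completeness argument via the logarithmic blow-up of the conformal factor, and the conformal-change curvature estimate (keyed to the cancellations $e^{-2F}|F'|^2$, $e^{-2F}|F''|$ being bounded) are all correct and cleanly laid out. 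That the extension is produced \emph{within the conformal class} is also exactly the feature the present application needs, since the flow here is a Yamabe flow.

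Where your write-up stops short is precisely where the real content lies, as you yourself flag: the construction of $\tilde d$. You correctly identify the obstruction (no injectivity-radius lower bound, so mollification in charts at scale $\sim d$ is not available for free) and you list the right ingredients — $O(1/d)$ semiconcavity of $d$ from $|Rm|\le\rho^{-2}$, a Greene--Wu/Lasry--Lions type inf-sup regularization against $\tfrac{1}{2\epsilon}\dist_g^2$, and a partition-of-unity patching step. But the step ``inf-sup-convolution at the spatially varying scale $\epsilon(x)\sim d(x)$ yields a $C^{1,1}$ function with two-sided $C(n)/d$ Hessian bounds'' is not an off-the-shelf lemma: running a Lasry--Lions smoothing with a position-dependent kernel, showing the output is $C^{1,1}$ with \emph{dimensional} constants and no hidden injectivity-radius input, and then checking that the subsequent local mollification and patching preserve the $O(1/d)$ Hessian control, is the technical heart of Hochard's argument, and none of it is actually carried out here. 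As written, this is a correct strategy and a correct outline of the easy parts, accompanied by a plausible but unverified plan for the hard part; it is not yet a self-contained proof of the statement.
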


The short-time existence of smooth solution to Yamabe flow (\ref{yamabe_flow_u}) on noncompact complete manifolds with bounded scalar curvature was obtained by An-Ma \cite{AM}  and Chen-Zhu \cite{CZ}. We shall show that the locally conformally flat solution has bounded sectional curature on some specific time interval if we assume additionally that the sectional curature is bounded at $t=0$.
\begin{thm}\label{short_existence}
	Suppose that $(M^n,g_0)$ is  an $n$-dimensional locally conformally flat complete noncompact smooth manifold with
	$$|Rm|(g_0)\le K$$
	for some $K>1$,
	there exist  positive constants $\beta(n)$ and $\Lambda(n)$ such that
	 the Yamabe flow (\ref{yamabe_flow_u}) has
	a smooth solution $g(t)$ on $M$ for $t\in [0,\frac{\beta}{K} ]$,  with the properties that
	 $g(0)=g_0$ and
	 \begin{equation}\label{yamabe_local_curvature}
 |Rm|(g(t))\le \Lambda K
	 \end{equation}
	for $t\in [0,\frac{\beta}{K} ]$.
\end{thm}
\begin{proof}
We consider  the following Dirichlet problem for a sequence of exhausting bounded smooth domains
\begin{equation}\label{yamabe_flow_u_local1}
	\left\{
	\begin{array}{ll}
		\frac{\partial u^N_m}{\partial t}=L_{g_0}u_m, \quad &\ x\in\Omega_m, \ t>0,\\
		u_m(x,t)>0, \quad &\ x\in\Omega_m, \ t>0,\\
		u_m(x,t)=1, \quad &\ x\in \partial \Omega_m, \ t>0,\\
		u_m(\cdot,0)=1, \quad &\ x\in\Omega_m,\\
	\end{array}
	\right.
\end{equation}
where $\Omega_1\subset\Omega_2\subset\cdots$ such that $\mathop{\cup}\limits^{\infty}_{m=1}\Omega_m=M$. Since  $u_m(x,t)=1$ is bounded on $\partial \Omega_m$, we may assume $u_m(x,t)$ achieves its maximum $\max\limits_{\Omega_m} u_m$ and minimum $\min\limits_{\Omega_m} u_m$ in the interior of $\Omega_m$. By Proposition 2.2 in \cite{CZ}, the Dirichlet problem (\ref{yamabe_flow_u_local1}) has
a unique smooth solution on $[0,+\infty)$.
Then by the maximum principle, we conclude that
\begin{align*}
	\max\limits_{\Omega_m} u_m(t)\leq (1+\frac{n-2}{(n-1)(n+2)}\sup\limits_{M^n}|R_{g_0}|t)^{\frac{n-2}{4}}.
\end{align*}
and
\begin{align*}
	\min\limits_{\Omega_m} u_m(t)\geq (1-\frac{n-2}{(n-1)(n+2)}\sup\limits_{M^n}|R_{g_0}|t)^{\frac{n-2}{4}}.
\end{align*}
Then there  positive constants $\beta(n)$ such that
$$
\frac{1}{2}\le u_m(t)\le \frac{3}{2}
$$
for $t\in [0,\frac{\beta}{K} ]$.
 This shows, for any fixed compact subset $D$ of $M^n$, the equation (\ref{yamabe_flow_u_local1}) is uniformly parabolic on $D \times[0, \frac{\beta}{K}]$. Thus we can get the uniform $C^\alpha$ estimates from Krylov-Safonov estimates \cite{KS} and then the uniform $C^{k, \alpha}$ estimates on $D \times[0, \frac{\beta}{K}]$ from Schauder theory. Hence by the arbitrariness of $D$, it follows from a standard diagonal argument that the Yamabe flow has a positive smooth solution $u$ on $M^n \times[0,\frac{\beta}{K}]$ with $u(t)$ satisfying
\begin{equation}\label{u_equavelent}
\frac{1}{2}\le u(t)\le \frac{3}{2}
\end{equation}
for $t\in [0,\frac{\beta}{K} ]$.

By the virtue of Shi's methods in \cite{S89}, we next show that (\ref{yamabe_local_curvature}) holds for $g(t)=u(t)^{\frac{4}{n-2}}g_0$. Firstly,  we show that
\begin{equation}\label{u_gradient_est}
\sup\limits_{M^n \times[0,\frac{\beta}{K}]} |\nabla_{g_0}u(t)|\le c(n,K).
\end{equation}
Take any fixed $x_0\in M^n$,
 we consider the ball
$
\widehat{B}\left(0, \pi\left(1 / K\right)^{1 / 4}\right) \subseteq T_{x_0} M
$
of radius $\pi\left(1 /K\right)^{1 / 4}$ in the tangent space. Since $\left|Rm_{g_0} \right|^2 \leq K$,  we can use the non-singular exponential map $\exp^{g_0} _{x_0}$ to pull everything back from $M$ to $\widehat{B}\left(0, \pi\left(1 / K\right)^{1 / 4}\right)$ for which the injectivity radius has a lower bound $\pi\left(1 / K\right)^{1 / 4}$,
and do the analysis on $\widehat{B}\left(0, \gamma_0\right) \subseteq T_{x_0} M$ of radius $\gamma_0=\frac{1}{8}\left(1 / K\right)^{1 / 4}$ under which has the  Poincar\'{e} inequality constant and the Sobolev inequality constants depending on $n$ and $K$.
Thus using (\ref{u_equavelent}) and the same arguments as in the proof of Theorem 6.1 in $\S 6$, Chapter VII of \cite{LSU}  to the equation (\ref{yamabe_flow_u}), we can get 
\begin{equation*}
	\sup\limits_{\widehat{B}\left(0, \gamma_0\right) \times[0,\frac{\beta}{K}]} |\nabla_{g_0}u(t)|\le c(n,K).
\end{equation*}
Since $x_0\in M^n$ is arbitrary, we have
(\ref{u_gradient_est}) holds.

We choose a smooth cut-off function 	$\xi(x)\in [0,1]$ on $M^n$ such that
$\xi(x) \equiv 1$ for $x \in B_{g_0}\left(x_0, \gamma_0\right), 
	\xi(x) \equiv 0$ for  $x \in M \backslash B_{g_0}\left(x_0, \gamma_0+\frac{1}{2}\right)$ and $
	 |\nabla_{g_0} \xi(x)| \leq 8.
$
For $t\in [0,\frac{\beta}{K} ]$, we calculate under the metric $g_0$ that
\begin{align*}
	&\ \ \frac{d}{dt}\int_{M} |\nabla u^N|^2\xi\\
	 =&2 \int_{M} \nabla  u^N \cdot \nabla\left( \Delta u-aRu\right) \cdot  \xi\\
	=&2N \int_{M} u^{N-1}\nabla u  \cdot \nabla\left(\Delta u\right)  \cdot \xi-2Na \int_{M}  u^{N-1}\nabla  u \cdot \nabla\left( Ru\right) \cdot  \xi\\
	=& N \int_{M} u^{N-1} \Delta |\nabla u|^2  \cdot \xi-2N \int_{M} u^{N-1}  |\nabla\nabla u|^2  \xi
	-2N \int_{M} u^{N-1} Rc(\nabla u,\nabla u)  \xi-2Na \int_{M}  u^{N-1}\nabla  u \cdot \nabla\left( Ru\right) \cdot  \xi\\
	=& -2N \int_{M} \nabla u^{N-1} \cdot\nabla u\cdot \nabla\nabla u  \cdot \xi-2N \int_{M}  u^{N-1} \cdot\nabla u\cdot \nabla\nabla u  \cdot \nabla \xi-2N \int_{M} u^{N-1}  |\nabla\nabla u|^2  \xi\\
	&
	-2N \int_{M} u^{N-1} Rc(\nabla u,\nabla u)  \xi+2Na \int_{M} \nabla u^{N-1}\cdot\nabla  u \cdot \left( Ru\right) \cdot  \xi+2Na \int_{M}  u^{N-1}\cdot \nabla\nabla  u \cdot \left( Ru\right) \cdot  \xi\\
	&+2Na \int_{M}  u^{N-1}\cdot \nabla  u \cdot \left( Ru\right) \cdot  \nabla\xi\\
		\le& -\frac{N}{2^{N-1}}\int_{M}   |\nabla\nabla u|^2  \xi +c(n,K, \gamma_0),
\end{align*}
where we used the H\"{o}lder inequality, (\ref{u_equavelent}) and (\ref{u_gradient_est}) in the last inequality. Then by integrating the above estimate, we get
\begin{equation}\label{u_2gradient_est}
\int^{\frac{\beta}{K}}_0	\int_{B_{g_0}\left(x_0, \gamma_0\right)}   |\nabla_{g_0}\nabla_{g_0} u|^2  \le c(n,K, \gamma_0)
\end{equation}

If $g(t)$ is locally conformally flat, hence by Lemma \ref{chow_formula} we have that $Rm(g(t))$ evolves:
\begin{equation}\label{Rm1}
		\partial_t Rm = (n-1)\Delta_{g(t)} Rm + Rm\ast Rm.
\end{equation}
Moreover, (\ref{u_equavelent}), (\ref{u_gradient_est}) and (\ref{u_2gradient_est}) imply that
\begin{equation}\label{Rm2}
	 \left(\frac{1}{2}\right)^{\frac{4}{n-2}}g_0\le g(t)\le  \left(\frac{3}{2}\right)^{\frac{4}{n-2}}g_0,
\end{equation}
$t\in [0,\frac{\beta}{K} ]$,
\begin{equation}\label{Rm3}
\sup\limits_{M^n \times[0,\frac{\beta}{K}]} |\nabla_{g_0}g(t)|\le c(n,K,\gamma_0),
\end{equation}
and 
\begin{equation}\label{Rm4}
	\int^{\frac{\beta}{K}}_0	\int_{B_{g_0}\left(x_0, \gamma_0\right)}   |\nabla_{g_0}\nabla_{g_0} g(t)|^2  \le c(n,K, \gamma_0).
\end{equation}
Then the rest of the proofs are similar to  \cite{S89}. Just notice that the related estimates in \cite{S89} are done for the Ricci-DeTurck flow, which are pulled back from the Ricci flow by the diffeomorphisms generating by some vector fields $V(t)$. Since the $Rm(g(t))$ evolves as the same way as the Ricci flow, combining with (\ref{Rm2})(\ref{Rm3}) (\ref{Rm4}), we can just use the similar arguments of \cite{S89} to get (\ref{yamabe_local_curvature}). Indeed, we can exactly use the arguments of 
Lemma 6.3-Lemma 6.5 and Theorem 6.6 in \cite{S89} and taking $V(t)\equiv 0$ in there. So we omit the  details
here and leave them to the readers.
\end{proof}

	\section{  Estimates for changing distances under the Yamabe flow}

In this section we do estimates for changing distances under the Yamabe flow, espcially assuming the pinching condition. These estimates will be crucial to constuct suitable cut-off functions in the proof of Theorem \ref{main1}.  We first need following estimates for the
laplacian of the distances.
\begin{lem}
	Let $\left(M^n, g\right)$ be a smooth Riemannian manifold.
	Given any $r_0>0$, $x_0 \in M$ and $x \in M-\left\{x_0\right\}$ with $D\doteq d(x,x_0)\ge2r_0$ and $B(x_0,D)\Subset M$,
	if
	$Rc\le (n-1)K$ on $B(x_0,r_0)\cup B(x,r_0)$, then we have for any $b\ge1$
	\begin{equation}\label{distance_est_1}
		\quad \Delta d\left(x_0, x\right)
		\leq  -\int_0^{s_0} bRc\left(\gamma^{\prime}(s), \gamma^{\prime}(s)\right) d s+\frac{(n-1)\left(b+(1-\sqrt{b})^2\right)}{r_0}+(2b-1)(n-1) K r_0,
	\end{equation}
	in the barrier sense, and
	if  $Rc\le (n-1)K$ on $B(x_0,r_0)$ and  $Rc\ge -(n-1)K$ on $ B(x,r_0)$, then we have for any $0\le b< 1$
	\begin{equation}\label{distance_est_2}
		\quad \Delta d\left(x_0, x\right)
		\leq  -\int_0^{s_0} bRc\left(\gamma^{\prime}(s), \gamma^{\prime}(s)\right) d s+\frac{(n-1)\left(b+(1-\sqrt{b})^2\right)}{r_0}+(n-1) K r_0,
	\end{equation}
	in the barrier sense, where $\gamma(s)$ is the unit speed minimal geodesic from $x_0$ and $x$.
	
\end{lem}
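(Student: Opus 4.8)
The plan is to run the classical proof of the Laplacian comparison theorem through the second variation of arc length (the index form), but with a piecewise-linear test function adapted to the two balls $B(x_0,r_0)$ and $B(x,r_0)$. With such a test function the Ricci curvature enters only as a weighted integral along the entire minimal geodesic plus two error terms supported on the initial and terminal segments of the geodesic, and those segments lie in $B(x_0,r_0)$ and $B(x,r_0)$ respectively; the two inequalities (\ref{distance_est_1}) and (\ref{distance_est_2}) then arise according to whether the terminal error is estimated by an upper or a lower bound on $Rc$.

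Fix a unit-speed minimal geodesic $\gamma\colon[0,s_0]\to M$ from $x_0$ to $x$, with $s_0=D\ge 2r_0$ (so $r_0\le s_0-r_0$); it exists and has image in a compact subset of $M$ since $B(x_0,D)\Subset M$. Let $e_1,\dots,e_{n-1},e_n=\gamma'$ be a parallel orthonormal frame along $\gamma$. For any piecewise-$C^1$ function $\phi$ on $[0,s_0]$ with $\phi(0)=0$ and $\phi(s_0)=1$, the fields $V_i=\phi\,e_i$ are admissible competitors in the index comparison for the Jacobi fields computing $\mathrm{Hess}\,d(x_0,\cdot)$ at $x$; summing the resulting inequalities over $i=1,\dots,n-1$ and using $\sum_{i=1}^{n-1}\langle R(e_i,\gamma')\gamma',e_i\rangle=Rc(\gamma',\gamma')$ yields
\begin{equation*}
\Delta d(x_0,x)\ \le\ \int_0^{s_0}\Bigl[(n-1)\,|\phi'(s)|^2-\phi(s)^2\,Rc\bigl(\gamma'(s),\gamma'(s)\bigr)\Bigr]\,ds ,
\end{equation*}
first when $x$ lies outside the cut locus of $x_0$, and then in the barrier sense in general by Calabi's trick: apply the estimate with base point $\gamma(\delta)$ to the smooth function $y\mapsto\delta+d(\gamma(\delta),y)$, which supports $d(x_0,\cdot)$ from above at $x$, and let $\delta\downarrow 0$.

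Now take $\phi$ equal to $\sqrt{b}\,s/r_0$ on $[0,r_0]$, equal to the constant $\sqrt{b}$ on $[r_0,s_0-r_0]$, and equal to the linear interpolation from $\sqrt{b}$ to $1$ on $[s_0-r_0,s_0]$. A direct integration gives $\int_0^{s_0}(n-1)|\phi'|^2\,ds=\tfrac{(n-1)(b+(1-\sqrt{b})^2)}{r_0}$, which is the second term of both (\ref{distance_est_1}) and (\ref{distance_est_2}). For the curvature term, write $-\int_0^{s_0}\phi^2\,Rc(\gamma',\gamma')\,ds=-b\int_0^{s_0}Rc(\gamma',\gamma')\,ds+\int_0^{s_0}(b-\phi^2)\,Rc(\gamma',\gamma')\,ds$; since $b-\phi^2\equiv 0$ on $[r_0,s_0-r_0]$, and $\gamma([0,r_0])\subset B(x_0,r_0)$ and $\gamma([s_0-r_0,s_0])\subset B(x,r_0)$, the error integral only sees those two segments and only there is a curvature bound used. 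On $[0,r_0]$ one has $b-\phi^2=b(1-s^2/r_0^2)\ge 0$ in both regimes, so $Rc\le(n-1)K$ gives $\int_0^{r_0}(b-\phi^2)Rc\le\tfrac{2}{3}b\,(n-1)Kr_0$. If $b\ge 1$, then $\phi^2$ decreases from $b$ to $1$ on $[s_0-r_0,s_0]$, so $b-\phi^2\ge 0$ there too and $Rc\le(n-1)K$ applies again; adding the two contributions and using the elementary inequality $2b+\sqrt{b}-2\ge 0$ ($b\ge 1$) bounds the total error by $(2b-1)(n-1)Kr_0$, which gives (\ref{distance_est_1}). If $0\le b<1$, then $\phi^2$ increases from $b$ to $1$ on $[s_0-r_0,s_0]$, so $b-\phi^2\le 0$ there, and the hypothesis $Rc\ge-(n-1)K$ on $B(x,r_0)$ gives $(b-\phi^2)Rc\le(\phi^2-b)(n-1)K$; a direct integration bounds the total error by $\tfrac{1}{3}(1+\sqrt{b})(n-1)Kr_0\le(n-1)Kr_0$, which gives (\ref{distance_est_2}).

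The index-form inequality and Calabi's trick are entirely standard and all the integrals are elementary, so the substantive content is the choice of $\phi$ together with the sign analysis of $b-\phi^2$ on the terminal segment in the two regimes $b\ge 1$ and $b<1$. The only point requiring genuine care — and the main, if modest, obstacle — is to keep track of which one-sided curvature bound is used on which interval, so that the upper bound is invoked on $B(x_0,r_0)$ throughout, the upper (resp. lower) bound on $B(x,r_0)$ exactly when $b\ge 1$ (resp. $b<1$), and no bound on $Rc$ is ever used on the uncontrolled middle segment.
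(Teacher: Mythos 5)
Your proposal is correct and follows essentially the same route as the paper: same piecewise-linear test function $\zeta$ in the index form, same decomposition $-\int\zeta^2 Rc = -b\int Rc + \int(b-\zeta^2)Rc$ confining the error to the two end segments, and the same case split on $b\ge 1$ versus $b<1$ to decide which one-sided bound on $Rc$ is invoked on $B(x,r_0)$. Your exact evaluation of the error integrals (giving $\tfrac{1}{3}(4b-\sqrt{b}-1)$ and $\tfrac{1}{3}(1+\sqrt{b})$) is slightly sharper than the paper's crude bounds $b-\zeta^2\le b-1$ (resp.\ $\le 1-b$), but both reduce to the stated constants $(2b-1)$ and $1$.
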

\begin{proof}
	Given any $x_0 \in M$ and $x \in M-\left\{x_0\right\}$, we have
	\begin{equation}\label{second_var}
		\quad \Delta d\left(x_0, x\right) \leq \int_0^{s_0}\left((n-1)\left(\zeta^{\prime}(s)\right)^2-\zeta^2 Rc\left(\gamma^{\prime}(s), \gamma^{\prime}(s)\right)\right) d s
	\end{equation}
	for any unit speed minimal geodesic $\gamma:\left[0, s_0\right] \rightarrow M$ joining $x_0$ to $x$ and any continuous piecewise $C^{\infty}$ function $\zeta:\left[0, s_0\right] \rightarrow[0,1]$ satisfying $\zeta(0)=0$ and $\zeta\left(s_0\right)=1$; see Theorem 18.6 \cite{RFV3}.	
	Taking
	$
	s_0=d\left(x, x_0\right)>2r_0,
	$
	we may choose
	$$
	\zeta(s)= \begin{cases}\frac{\sqrt{b}s}{r_0}, & \text { if } 0 \leq s \leq r_0, \\ \sqrt{b}, & \text { if } r_0< s \leq s_0-r_0,
		\\ 1+\frac{1-\sqrt{b}}{r_0}(s-s_0), & \text { if } s_0-r_0< s \leq s_0.
	\end{cases}
	$$
	in (\ref{second_var}), so that
	$$
	\begin{aligned}
		\Delta d\left(x_0, x\right) \leq & \int_0^{r_0}\left(\frac{(n-1)b}{r_0^2}-\frac{b s^2}{r_0^2} Rc\left(\gamma^{\prime}(s), \gamma^{\prime}(s)\right)\right) d s \\
		& +\int_{s_0-r_0}^{s_0}\left(\frac{(n-1)(1-\sqrt{b})^2}{r_0^2}-\left(1+\frac{1-\sqrt{b}}{r_0}(s-s_0)\right)^2 Rc\left(\gamma^{\prime}(s), \gamma^{\prime}(s)\right)\right) d s \\
		& -\int_{r_0}^{s_0-r_0} bRc\left(\gamma^{\prime}(s), \gamma^{\prime}(s)\right) d s .
	\end{aligned}
	$$
	We simplify this as
	\begin{align*}
		\Delta & d\left(x_0, x\right) \\
		\leq & -\int_0^{s_0} bRc\left(\gamma^{\prime}(s), \gamma^{\prime}(s)\right) d s\\	& +\int_0^{r_0}\left(\frac{(n-1)b}{r_0^2}+b\left(1-\frac{ s^2}{r_0^2}\right)Rc\left(\gamma^{\prime}(s), \gamma^{\prime}(s)\right)\right) d s \\
		&+\int_{s_0-r_0}^{s_0}\left(\frac{(n-1)(1-\sqrt{b})^2}{r_0^2}+\left(b-\left(1+\frac{1-\sqrt{b}}{r_0}(s-s_0)\right)^2\right) Rc\left(\gamma^{\prime}(s), \gamma^{\prime}(s)\right)\right) d s.
	\end{align*}
	Therefore, in the barrier sense, if $b\ge1$ we have
	$$
	\Delta d\left(x_0, x\right)
	\leq  -\int_0^{s_0} bRc\left(\gamma^{\prime}(s), \gamma^{\prime}(s)\right) d s+\frac{(n-1)\left(b+(1-\sqrt{b})^2\right)}{r_0}+(2b-1)(n-1) K r_0, 		
	$$
	since $Rc\leq  (n-1) K$ along $\left.\gamma\right|_{\left[0, r_0\right]} \subset B\left(x_0, r_0\right) $, $\left.\gamma\right|_{\left[s_0-r_0, s_0\right]} \subset B\left(x, r_0\right) $ and $0< b-\left(1+\frac{1-\sqrt{b}}{r_0}(s-s_0)\right)^2\le b-1$ for  $s\in[s_0-r_0,s_0]$.
	If $0\le b\le 1$, we have	
	$$
	\Delta d\left(x_0, x\right)
	\leq  -\int_0^{s_0} bRc\left(\gamma^{\prime}(s), \gamma^{\prime}(s)\right) d s+\frac{(n-1)\left(b+(1-\sqrt{b})^2\right)}{r_0}+ (n-1)K r_0, 		
	$$
	since $Rc\leq  (n-1) K$ along $\left.\gamma\right|_{\left[0, r_0\right]} \subset B\left(x_0, r_0\right) $, $Rc\ge  -(n-1) K$ along $\left.\gamma\right|_{\left[0, r_0\right]} \subset B\left(x_0, r_0\right)$ $\left.\gamma\right|_{\left[s_0-r_0, s_0\right]} \subset B\left(x, r_0\right) $  and $b-1\le b-\left(1+\frac{1-\sqrt{b}}{r_0}(s-s_0)\right)^2\le 0$ for $s\in[s_0-r_0,s_0]$.
	
\end{proof}

Next we do the estimates for heat operator of the distance  under the Yamabe flow by assuming the pinching condition.
\begin{lem}\label{cut_off_1}
	Let $\left(M^n, g(t)\right), t \in$ $[0, T)$, be a solution to the Yamabe flow.
	Given any $r_0>0$, $x_0 \in M$ and $x \in M-\left\{x_0\right\}$ with $\left(x_0, t_0\right) \in M \times$ $[0, T)$, $D\doteq d(x,x_0)$ and $B_{g\left(t_0\right)}(x_0,D)\Subset M$,
	if
	$$
	Rc(g\left(t_0\right)) \leq(n-1) K \quad \text {on }  B_{g\left(t_0\right)}\left(x_0, r_0\right)\cup B_{g\left(t_0\right)}\left(x, r_0\right),
	$$
	where $K \geq 0$ and
	\begin{equation}\label{pinching_1}
		Rc(g\left(t_0\right))\ge  \left(\epsilon R(g\left(t_0\right))-\lambda\right)g\left(t_0\right)
	\end{equation}
	along all minimal geodesics from $x_0$ and $x$ where $\epsilon\le \frac{1}{n-1}$,
	then we have
	\begin{equation}\label{distance_est_7}
		\left.\frac{\partial}{\partial t}\right|_{t=t_0} d_{g(t)}\left(x_0, x\right) \geq-\frac{2(n-1)}{\epsilon}\left(K r_0+\frac{1}{r_0}\right) -\frac{\lambda}{\epsilon}d_{g(t_0)}\left(x, x_0\right),
	\end{equation}
	in the barrier sense
	and if $d_{g(t_0)}(x_0,x)> 2r_0$, then we have
	\begin{equation}\label{distance_est_2}
		\left.\quad\left(\frac{\partial}{\partial t}-(n-1)\Delta_{g(t)}\right) d_{g(t)}\left(x, x_0\right)\right|_{t=t_0} \geq- \frac{2(n-1)}{\epsilon}\left(K r_0+\frac{1}{r_0} \right)-\frac{\lambda}{\epsilon}d_{g(t_0)}\left(x, x_0\right)
	\end{equation}
	in the barrier sense.
\end{lem}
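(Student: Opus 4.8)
The plan is to reduce both inequalities to the standard first-variation formula for the length of a fixed minimal geodesic under a metric deformation, combined with the Laplacian estimates of the previous lemma. Under the Yamabe flow $\partial_t g=-Rg$, for a minimal geodesic $\gamma:[0,s_0]\to M$ from $x_0$ to $x$, unit speed with respect to $g(t_0)$, the first variation of arclength gives
\begin{equation*}
\left.\frac{\partial}{\partial t}\right|_{t=t_0} d_{g(t)}(x_0,x)\ \geq\ -\frac12\int_0^{s_0} R(g(t_0))\bigl(\gamma'(s),\gamma'(s)\bigr)\,\text{(trace form)}\,ds\ =\ -\frac12\int_0^{s_0} R(g(t_0))\,ds,
\end{equation*}
in the barrier sense (the inequality accounting for the fact that $\gamma$ need not be minimal for nearby $t$; this is the standard argument, e.g. as in the Ricci flow case in \cite{RFV3}). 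So the entire game is to bound $\int_0^{s_0}R\,ds$ from above. Here is where the pinching hypothesis (\ref{pinching_1}) enters decisively: tracing, or rather using $Rc(g(t_0))\geq(\epsilon R-\lambda)g(t_0)$ applied to the unit vector $\gamma'(s)$, we get $Rc(\gamma',\gamma')\geq \epsilon R-\lambda$ pointwise along $\gamma$, hence $R(s)\leq \tfrac1\epsilon Rc(\gamma'(s),\gamma'(s))+\tfrac\lambda\epsilon$ at each point of $\gamma$. Therefore
\begin{equation*}
\frac12\int_0^{s_0}R\,ds\ \leq\ \frac{1}{2\epsilon}\int_0^{s_0} Rc(\gamma'(s),\gamma'(s))\,ds\ +\ \frac{\lambda}{2\epsilon}\,s_0 .
\end{equation*}
It remains to bound $\int_0^{s_0}Rc(\gamma',\gamma')\,ds$, and this is exactly what the previous lemma controls: applying (\ref{distance_est_1}) with, say, $b=1$ (when $d>2r_0$), we obtain
\begin{equation*}
\int_0^{s_0} Rc(\gamma',\gamma')\,ds\ \leq\ -\Delta d(x_0,x)+\frac{(n-1)}{r_0}+(n-1)Kr_0 ,
\end{equation*}
and when $d\leq 2r_0$ one instead uses the (elementary) bound $Rc\leq (n-1)K$ directly on the geodesic contained in $B(x_0,r_0)\cup B(x,r_0)$, giving $\int_0^{s_0}Rc(\gamma',\gamma')\,ds\leq (n-1)K\cdot 2r_0$, which after absorbing into the constants yields (\ref{distance_est_7}). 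For the heat-operator estimate (\ref{distance_est_2}), I would not discard the $-\Delta d$ term: keeping it, the above chain gives
\begin{equation*}
\left.\left(\frac{\partial}{\partial t}-(n-1)\Delta_{g(t)}\right)d_{g(t)}(x,x_0)\right|_{t=t_0}\ \geq\ -\frac{1}{2\epsilon}\Bigl(-\Delta d+\frac{n-1}{r_0}+(n-1)Kr_0\Bigr)-\frac{\lambda}{2\epsilon}s_0+(n-1)\Delta d ,
\end{equation*}
and since $\epsilon\leq \tfrac1{n-1}$ forces $(n-1)-\tfrac{1}{2\epsilon}\geq (n-1)-\tfrac{n-1}{2}>0$ — so the coefficient of $\Delta d$ is nonnegative — we may discard the $\Delta d$ contribution (in the barrier sense, where $\Delta d$ is bounded above by a smooth barrier, this sign is what lets us drop it), leaving precisely the right-hand side of (\ref{distance_est_2}) after bookkeeping the constant $2(n-1)/\epsilon$.

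To organize the write-up cleanly I would: (1) record the barrier first-variation inequality $\partial_t d\geq -\tfrac12\int_\gamma R\,ds$ along a $t_0$-minimal geodesic; (2) convert $R\leq \tfrac1\epsilon Rc(\gamma',\gamma')+\tfrac\lambda\epsilon$ using (\ref{pinching_1}); (3) invoke the previous lemma with $b=1$ (case $d>2r_0$) or the crude bound (case $d\leq 2r_0$) to estimate $\int_\gamma Rc(\gamma',\gamma')\,ds$; (4) assemble (\ref{distance_est_7}); (5) for (\ref{distance_est_2}), repeat steps (1)–(3) but retain $\Delta d$ and use $\epsilon\leq\tfrac1{n-1}$ to kill the residual Laplacian term. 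The main obstacle is a technical one rather than conceptual: making the barrier-sense manipulation of $\Delta d$ rigorous — one must work with a smooth upper barrier $\phi$ for $d(\cdot,x_0)$ near $x$ satisfying $\Delta\phi(x)\geq \Delta d(x_0,x)$ (distributional/barrier sense) and simultaneously having the correct first-variation behavior in $t$; the standard device (used for Ricci flow, e.g. in \cite{RFV3}) is to take $\phi(y,t)=\ell_{g(t)}(\tilde\gamma_y)$ where $\tilde\gamma_y$ interpolates a fixed $t_0$-minimal geodesic to nearby points $y$, check that this is a legitimate barrier from above for $d_{g(t)}(\cdot,x_0)$ at $(x,t_0)$, and that the pointwise inequalities proved for $\gamma$ transfer. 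I expect this to go through verbatim as in the Ricci flow literature, the only new ingredient being the substitution of the pinching inequality at step (2), which is where the $-\tfrac\lambda\epsilon d$ error term and the loss of the factor $\tfrac1\epsilon$ originate.
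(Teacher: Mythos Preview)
There is a genuine gap in your argument for (\ref{distance_est_2}), and it stems from the choice $b=1$ in the previous lemma. After your substitution you are left with a residual term $\bigl((n-1)-\tfrac{1}{2\epsilon}\bigr)\!\int_\gamma Rc(\gamma',\gamma')\,ds$ (equivalently, a residual $\Delta d$ term with that coefficient). Your sign claim ``$\epsilon\leq \tfrac1{n-1}$ forces $(n-1)-\tfrac{1}{2\epsilon}\geq (n-1)-\tfrac{n-1}{2}>0$'' is backwards: the hypothesis $\epsilon\le \tfrac{1}{n-1}$ gives $\tfrac{1}{2\epsilon}\ge \tfrac{n-1}{2}$, which is only an \emph{upper} bound $(n-1)-\tfrac{1}{2\epsilon}\le \tfrac{n-1}{2}$; for small $\epsilon$ the coefficient is large and negative. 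In that regime you would need an \emph{upper} bound on $\int_\gamma Rc(\gamma',\gamma')$ along the whole geodesic, which the hypotheses do not provide (the bound $Rc\le (n-1)K$ is only assumed on the two end-balls). So the residual term cannot be discarded and the argument breaks.

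The paper's proof avoids this entirely, and this is exactly why the free parameter $b$ was built into the previous lemma. One chooses $b=\tfrac{1}{(n-1)\epsilon}\ge 1$ so that after multiplying (\ref{distance_est_1}) by $(n-1)$ the coefficient of $\int_\gamma Rc$ becomes $\tfrac{1}{\epsilon}$, matching precisely the coefficient coming from the pinching step; the two $\int_\gamma Rc$ contributions then cancel identically and no sign analysis is needed. Your plan for (\ref{distance_est_7}) has a related problem: invoking (\ref{distance_est_1}) with $b=1$ produces a $-\Delta d$ on the right, but (\ref{distance_est_7}) contains no Laplacian and there is no hypothesis letting you drop it. The paper instead bounds $\int_\gamma Rc(\gamma',\gamma')$ directly via the second variation inequality $\int \phi^2\,Rc(\gamma',\gamma')\le (n-1)\int|\phi'|^2$ with a piecewise-linear $\phi$ vanishing at both endpoints (Hamilton's standard trick), which involves no Laplacian at all. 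The barrier-sense technicalities you flag at the end are handled exactly as you anticipate and are not the obstruction; the obstruction is the choice of $b$.
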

\begin{proof}
	Firstly, we get from (\ref{pinching_1}) and Lemma 18.4 in \cite{RFV3} that there exists a unit speed minimal geodesic $\gamma $ from $x_0$ to $x$ such that
	\begin{equation}\label{est_1}
		\left.\frac{\partial}{\partial t}\right|_{t=t_0} d_{g(t)}\left(x, x_0\right) \geq-\int_\gamma  R d s \ge -\frac{1}{\epsilon}\int_\gamma Rc(\dot{\gamma}(s),\dot{\gamma}(s)) d s -\frac{\lambda}{\epsilon}d_{g(t_0)}\left(x, x_0\right).
	\end{equation}
	Taking $b=\frac{1}{(n-1)\epsilon}\ge 1$ in (\ref{distance_est_1}), we have
	\begin{equation}\label{est_2}
		(n-1)\Delta_{g\left(t_0\right)} d_{g\left(t_0\right)}\left(x_0, x\right)
		\leq  -\frac{1}{\epsilon}\int_{\gamma} Rc\left(\gamma^{\prime}(s), \gamma^{\prime}(s)\right) d s+\frac{2(n-1)}{\epsilon}\left( K r_0+\frac{1}{r_0}\right).		
	\end{equation}
	Then (\ref{distance_est_2}) follows from (\ref{est_1}) and (\ref{est_2}). Noted that
	(\ref{distance_est_7}) just follows from the idea of the estimates for distance by Hamilton \cite{H2}. By the second variation of arc length, we have
	\begin{equation}\label{est_9}
		\int_{0}^{s_{0}} \phi^{2} R c(X, X) d s \leq(n-1) \int_{0}^{s_{0}}|\dot{\phi}(s)|^{2} d s
	\end{equation}
	for every nonnegative function $\phi(s)$ defined on the interval $\left[0, s_{0}\right]$. If $d_{g(t_0)}(x_0,x)> 2r_0$,  we can choose $\phi(s)$ by
	$$
	\phi(s)= \begin{cases}s, & s \in[0,1]; \\ 1, & s \in\left[1, s_{0}-k_0\right]; \\ \frac{1}{k_0}(s_{0}-s), & s \in\left[s_{0}-k_0, s_{0}\right].\end{cases}
	$$
	Then (\ref{distance_est_7})  follows from (\ref{est_1}) and (\ref{est_9})  if $d_{g(t_0)}(x_0,x)>2r_0$.
	If $d_{g(t_0)}(x_0,x)\le 2r_0$,  then
	\begin{equation*}
		\left.\frac{\partial}{\partial t}\right|_{t=t_0} d_{g(t)}\left(x, x_0\right) \ge -\frac{1}{\epsilon}\int_\gamma Rc(\dot{\gamma}(s),\dot{\gamma}(s)) d s -\frac{\lambda}{\epsilon}d_{g(t_0)}\left(x, x_0\right)
		\ge -\frac{2Kr_0}{\epsilon} -\frac{\lambda}{\epsilon}d_{g(t_0)}\left(x, x_0\right).
	\end{equation*}
	Then (\ref{distance_est_7}) holds for any case.
\end{proof}

We also have the following lemma, which is just a corollary of Lemma \ref{cut_off_1}.

\begin{lem}\label{shrinking}
	There exist constants $\gamma=\gamma(n,\epsilon) $ and $\zeta(n,\epsilon)$ depending only on $n$ and $\epsilon$ such that the following is true. Suppose $\left(N^n, g(t)\right)$ is a Yamabe flow for $t \in[0, T]$ with $g(0)=g_0$ and $x_0 \in N$ with $B_{g_0}\left(x_0, r\right) \Subset N$ for some $r>0$, which satisfies
	$$
	Rc(g(t)) \leq \frac{Q}{t}
	$$
	and
	\begin{equation}\label{pinching_2}
		Rc(g\left(t\right))\ge  \left(\epsilon R(g\left(t\right))-1\right)g\left(t\right)
	\end{equation}
	on $B_{g_0}\left(x_0, r\right)$ for each $t \in(0, T]$ with $\epsilon\le \frac{1}{n-1}$, we have
	$$d_{g_0}(x,x_0)\le e^{\zeta T}\left(d_{g(T)}(x,x_0)+\gamma\sqrt{QT}\right)$$ on $B_{g_0}\left(x_0, r\right)$
	and hence
	$$
	B_{g(T)}\left(x_0,e^{-\zeta T}r-\gamma \sqrt{Q T} \right) \subset B_{g_0}\left(x_0, r\right).
	$$	
\end{lem}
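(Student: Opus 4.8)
\textbf{Proof proposal for Lemma \ref{shrinking}.}

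The plan is to derive the distance-distortion inequality from the heat-operator estimate \eqref{distance_est_7} of Lemma \ref{cut_off_1}, treating $\lambda=1$ and exploiting the curvature bound $Rc(g(t))\le \frac{Q}{t}$ to control the terms on the right-hand side. First I would fix $x\in B_{g_0}(x_0,r)$ and set $\rho(t)\doteq d_{g(t)}(x,x_0)$, which is a locally Lipschitz function of $t$ on $(0,T]$ by the standard first-variation argument. At each time $t_0\in(0,T]$ at which $\rho$ is differentiable, I want to apply \eqref{distance_est_7} with the choice $r_0=\min\{r_0(t_0),\, \tfrac12\rho(t_0)\}$ where $r_0(t_0)$ is chosen so that $Rc(g(t_0))\le (n-1)K$ holds on the two balls of radius $r_0$ with $K=\frac{Q}{(n-1)t_0}$; since the hypothesis gives $Rc(g(t_0))\le \frac{Q}{t_0}$ everywhere on $B_{g_0}(x_0,r)$, we may in fact take $(n-1)K=\frac{Q}{t_0}$ directly. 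Substituting into \eqref{distance_est_7}, the worst term is $-\frac{2(n-1)}{\epsilon}\big(Kr_0+\frac{1}{r_0}\big)$; optimizing in $r_0$ (balancing $Kr_0$ against $\frac{1}{r_0}$, i.e. $r_0\sim K^{-1/2}\sim \sqrt{t_0/Q}$, and noting $r_0$ is always admissible once $t_0$ is small relative to $r^2/Q$, the range we care about) yields
\begin{equation*}
\left.\frac{\partial}{\partial t}\right|_{t=t_0}\rho(t)\ \geq\ -\frac{c(n)}{\epsilon}\sqrt{\frac{Q}{t_0}}\ -\ \frac{1}{\epsilon}\,\rho(t_0)
\end{equation*}
in the barrier sense, where $c(n)$ is dimensional; set $\zeta=\zeta(n,\epsilon)=\frac{1}{\epsilon}$ (any constant $\ge \frac1\epsilon$ works).

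The second step is to integrate this differential inequality backwards in time. Writing the inequality as $\frac{d}{dt}\big(e^{\zeta t}\rho(t)\big)\ge -\frac{c(n)}{\epsilon}e^{\zeta t}\sqrt{Q/t}$ in the barrier/Lipschitz sense, integrating from a small $s>0$ to $T$ gives
\begin{equation*}
e^{\zeta T}\rho(T)\ \geq\ e^{\zeta s}\rho(s)\ -\ \frac{c(n)}{\epsilon}\int_s^T e^{\zeta t}\sqrt{\frac{Q}{t}}\,dt.
\end{equation*}
The integral $\int_0^T e^{\zeta t}t^{-1/2}\,dt$ converges and is bounded by $e^{\zeta T}\cdot 2\sqrt{T}$, so the error term is at most $\frac{2c(n)}{\epsilon}e^{\zeta T}\sqrt{QT}$. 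Letting $s\to 0^+$, and using that $\rho(s)\to d_{g_0}(x,x_0)$ by continuity of the flow at $t=0$ (so $e^{\zeta s}\rho(s)\to d_{g_0}(x,x_0)$), we obtain
\begin{equation*}
d_{g_0}(x,x_0)\ \leq\ e^{\zeta T}\Big(d_{g(T)}(x,x_0)\ +\ \gamma\sqrt{QT}\Big)
\end{equation*}
with $\gamma=\gamma(n,\epsilon)=\tfrac{2c(n)}{\epsilon}$, which is the first claimed inequality. The ball inclusion then follows formally: if $d_{g(T)}(x,x_0)\le e^{-\zeta T}r-\gamma\sqrt{QT}$, then the displayed inequality gives $d_{g_0}(x,x_0)\le e^{\zeta T}\cdot e^{-\zeta T}r = r$, so $x\in B_{g_0}(x_0,r)$; this is exactly $B_{g(T)}(x_0,\,e^{-\zeta T}r-\gamma\sqrt{QT})\subset B_{g_0}(x_0,r)$.

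The main obstacle is a subtle one of \emph{domain bookkeeping} rather than a hard estimate: to invoke \eqref{distance_est_7} at time $t_0$ I need $B_{g(t_0)}(x_0,D)\Subset M$ and the pinching/upper curvature bounds along the minimal geodesics joining $x_0$ to $x$, but the hypotheses are only stated on the fixed set $B_{g_0}(x_0,r)$. I would handle this by first running the argument for $x$ in a slightly smaller $g_0$-ball and verifying inductively (in the spirit of a continuity/open-closed argument in $T$, or simply by restricting attention to $x$ with $d_{g(t)}(x,x_0)$ staying below the relevant threshold for all $t$) that the relevant minimal geodesics remain inside $B_{g_0}(x_0,r)$ throughout $[0,T]$ — this is where the constant $e^{\zeta T}$ and the requirement $e^{-\zeta T}r-\gamma\sqrt{QT}>0$ do their work. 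A second minor point is the regularity needed to integrate a barrier-sense differential inequality; this is standard (the function $e^{\zeta t}\rho(t)$ is locally Lipschitz on $(0,T]$ and its barrier-sense lower derivative bound forces the corresponding integral inequality), and I would cite the usual references (e.g. the treatment of such inequalities in \cite{RFV3}) rather than reprove it.
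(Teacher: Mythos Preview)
Your proposal is correct and follows essentially the same approach as the paper: the paper's one-line proof simply says ``the Lemma follows from \eqref{distance_est_7} by choosing $r_0=\sqrt{t}$,'' leaving the integration of the resulting differential inequality and the domain bookkeeping implicit, whereas you spell both out. Your optimization $r_0\sim K^{-1/2}\sim\sqrt{t/Q}$ is in fact slightly sharper than the paper's stated choice $r_0=\sqrt t$ (which would give a constant $\gamma$ depending also on $Q$, harmless in the application since $Q=Q(n,\epsilon)$ there, but not matching the lemma as written); otherwise the two arguments are the same.
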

\begin{proof}
	The Lemma follows from (\ref{distance_est_7}) by choosing $r_0=\sqrt{t}$.

\end{proof}

Finally,  we have the following the estimates for heat operator of the distance  under the Yamabe flow by assuming Ricci curvature is bounded.	
\begin{lem}\label{cut_off_2}
	Let $\left(M^n, g(t)\right), t \in$ $[0, T)$, be a solution to the Yamabe flow.
	Given any $r_0>0$, $x_0 \in M$ and $x \in M-\left\{x_0\right\}$ with $\left(x_0, t_0\right) \in M \times$ $[0, T)$, $D\doteq d(x,x_0)$, $B_{g\left(t_0\right)}(x_0,D)\Subset M$ and $d_{g(t_0)}(x_0,x)> 2r_0$, if
	$$
	|Rc(g\left(t_0\right))| \leq(n-1) K \quad \text {on }  M^n,
	$$
	where $K \geq 0$ ,
	then we have
	\begin{equation}\label{distance_est_3}
		\left.\quad\left(\frac{\partial}{\partial t}-(n-1)\Delta_{g(t)}\right) d_{g(t)}\left(x, x_0\right)\right|_{t=t_0} \geq -(n-1)\left(\frac{1}{r_0}+ K r_0\right)-n(n-1)Kd_{g(t_0)}\left(x, x_0\right)
	\end{equation}
	in the barrier sense.
\end{lem}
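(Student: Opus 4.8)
The plan is to combine two standard ingredients: the Laplacian comparison estimate (\ref{distance_est_1}) applied with the unweighted choice $b=1$, and Hamilton's formula for the time derivative of distance under a geometric flow. Since the hypothesis here is a two-sided bound $|Rc(g(t_0))|\le (n-1)K$ on all of $M^n$ (rather than a pinching inequality), the argument is in fact simpler than that of Lemma \ref{cut_off_1}: we do not need to trade Ricci for scalar curvature, so the constant $\epsilon$ plays no role. First I would invoke Lemma 18.4 of \cite{RFV3} exactly as in (\ref{est_1}): there is a unit speed minimal geodesic $\gamma:[0,s_0]\to M$ from $x_0$ to $x$ (where $s_0=d_{g(t_0)}(x_0,x)$) realizing the barrier inequality
\[
\left.\frac{\partial}{\partial t}\right|_{t=t_0} d_{g(t)}(x,x_0)\ \ge\ -\int_0^{s_0} R\bigl(g(t_0)\bigr)\bigl(\gamma(s)\bigr)\,ds .
\]
Because $|Rc|\le (n-1)K$ pointwise on $M^n$, the scalar curvature satisfies $R\le n(n-1)K$ everywhere, so the right-hand side is bounded below by $-n(n-1)K\,d_{g(t_0)}(x,x_0)$, which accounts for the last term of (\ref{distance_est_3}).

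Next I would handle the Laplacian term. Apply (\ref{distance_est_1}) with $b=1$: since $Rc(g(t_0))\le (n-1)K$ holds on all of $M^n$ (in particular on $B(x_0,r_0)\cup B(x,r_0)$, and $d_{g(t_0)}(x_0,x)>2r_0$ by hypothesis so the lemma applies), we get
\[
\Delta_{g(t_0)} d_{g(t_0)}(x_0,x)\ \le\ -\int_0^{s_0} Rc\bigl(\gamma'(s),\gamma'(s)\bigr)\,ds+\frac{n-1}{r_0}+(n-1)Kr_0
\]
in the barrier sense, using that $b+(1-\sqrt b)^2=1$ and $2b-1=1$ when $b=1$. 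Multiplying by $(n-1)$ and combining with the time-derivative bound above, the two integrals $-\int_0^{s_0}R\,ds$ and $+(n-1)\int_0^{s_0}Rc(\gamma',\gamma')\,ds$ do not directly cancel as in Lemma \ref{cut_off_1} (there we had $\epsilon^{-1}\int Rc = \int R$ along $\gamma$, exact matching); instead here I would simply estimate $\int_0^{s_0}Rc(\gamma',\gamma')\,ds$ from below using $Rc\ge -(n-1)K$, giving $-\int_0^{s_0}Rc(\gamma',\gamma')\,ds\le (n-1)Ks_0$, so that $-(n-1)\Delta_{g(t_0)}d\ge -(n-1)^2Ks_0-\tfrac{(n-1)^2}{r_0}-(n-1)^2Kr_0$. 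A cleaner route that matches the stated constants exactly is to keep the integral symbolic and add: $\bigl(\tfrac{\partial}{\partial t}-(n-1)\Delta\bigr)d\ge -\int_0^{s_0}R\,ds+(n-1)\int_0^{s_0}Rc(\gamma',\gamma')\,ds-\tfrac{(n-1)^2}{r_0}-(n-1)^2Kr_0$; here $-R+(n-1)Rc(\gamma',\gamma')\le -R + (n-1)\cdot(n-1)K$ pointwise is wasteful, so instead bound the combined integrand $-R(s)+(n-1)Rc(\gamma',\gamma')(s)$ by noting $Rc(\gamma',\gamma')\le R$ when $Rc\ge 0$ — but we only have $Rc\ge -(n-1)K$. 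The most transparent bookkeeping is therefore to bound $\int_0^{s_0}R\,ds\le n(n-1)K s_0$ and $(n-1)\int_0^{s_0}Rc(\gamma',\gamma')\,ds\ge -(n-1)^2 K s_0\ge -n(n-1)K s_0$, absorb both into the single term $-n(n-1)K\,d_{g(t_0)}(x,x_0)$ after adjusting the constant, and carry the local terms $\tfrac{n-1}{r_0}+(n-1)Kr_0$ from the $b=1$ comparison (the factor $(n-1)$ multiplying $\Delta$ gives $\tfrac{(n-1)^2}{r_0}+(n-1)^2Kr_0$, and I would note that the stated $(n-1)(\tfrac1{r_0}+Kr_0)$ presumes one uses the inequality $\Delta d\le -\int Rc + \tfrac1{r_0}$-type bound without the extra $(n-1)$ on the curvature-free part, i.e.\ the sharper Calabi/Laplacian comparison $\Delta d\le \tfrac{n-1}{d}$ type estimate on the end pieces — I would adopt whichever normalization of (\ref{distance_est_1}) the authors intend).

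The only real subtlety, and the step I expect to require the most care, is matching the precise constants in (\ref{distance_est_3}) against the output of (\ref{distance_est_1}) with $b=1$: the coefficient $(n-1)$ (not $(n-1)^2$) in front of $\tfrac1{r_0}+Kr_0$ indicates that the $(n-1)$ from the $(n-1)\Delta$ operator must be balanced by the $\tfrac1{n-1}$ implicit in how the comparison constant is tabulated, or that a slightly different cutoff $\zeta$ is used than the one in the proof of the first lemma of this section. I would resolve this by re-deriving the Laplacian bound directly with the tent function $\zeta$ supported near the two endpoints, tracking the $(n-1)$ factors explicitly, rather than quoting (\ref{distance_est_1}) verbatim; everything else — the existence of the good minimal geodesic, the pointwise curvature bounds, and the barrier-sense arithmetic — is routine. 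I would close by remarking that, unlike in Lemma \ref{cut_off_1}, no pinching hypothesis is needed and the estimate degrades linearly in $d_{g(t_0)}(x,x_0)$ with coefficient controlled purely by the two-sided Ricci bound, which is exactly the form needed to build the cutoff function on the interval $[t_k,t_{k+1}]$ where the a priori bound $|Rm|\le Q/t_k$ is in force.
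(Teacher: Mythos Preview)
Your approach is exactly the paper's: combine Hamilton's lower bound $\partial_t d\ge -\int_\gamma R\,ds$ with the Laplacian comparison (\ref{distance_est_1}) at $b=1$, then estimate the resulting integrals using the two-sided Ricci bound. The paper's entire proof is the single sentence ``Then (\ref{distance_est_3}) follows from (\ref{est_3}) and (\ref{est_4})'', with (\ref{est_3}) and (\ref{est_4}) being precisely the two ingredients you name.

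Your lengthy discussion of the constants is well-founded: the paper does not actually verify them, and indeed the argument as written yields $-(n-1)^2\bigl(\tfrac{1}{r_0}+Kr_0\bigr)$ for the local term (since the $(n-1)$ from the operator multiplies the $(n-1)$ already present in (\ref{est_4})) and $-(n-1)(2n-1)K\,d$ for the integral term (bounding $-R\ge -n(n-1)K$ and $(n-1)Rc(\gamma',\gamma')\ge -(n-1)^2K$ separately), rather than the $-(n-1)\bigl(\tfrac{1}{r_0}+Kr_0\bigr)-n(n-1)K\,d$ stated. So the constants in the lemma as printed appear to be off by harmless dimensional factors. You should not try to recover the exact constants by changing the cutoff $\zeta$ or re-normalizing; they are simply imprecise in the paper. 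What matters for the application in the proof of Theorem~\ref{main1} is only that the bound has the shape $-C_1\bigl(\tfrac{1}{r_0}+Kr_0\bigr)-C_2 K\,d$ with $C_1,C_2$ depending only on $n$, and your argument (like the paper's) delivers that.
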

\begin{proof}
	Firstly we have for a minimal geodesic $\gamma $ such that
	\begin{equation}\label{est_3}
		\left.\frac{\partial}{\partial t}\right|_{t=t_0} d_{g(t)}\left(x, x_0\right) \geq-\int_\gamma  Rd s .
	\end{equation}
	Taking $b= 1$ in (\ref{distance_est_1}), we have
	\begin{equation}\label{est_4}
		\Delta_{g\left(t_0\right)} d_{g\left(t_0\right)}\left(x_0, x\right)
		\leq  -\int_0^{s_0} Rc\left(\gamma^{\prime}(s), \gamma^{\prime}(s)\right) d s+(n-1)\left(\frac{1}{r_0}+ K r_0\right).		
	\end{equation}
	Then (\ref{distance_est_3}) follows from (\ref{est_3}) and (\ref{est_4}).
\end{proof}

	\section{A local $\frac{c}{t}$ estimate under the pinching condition }
	
	In this section we get $\frac{c}{t}$ estimate of curvature if the pinching condition is preserved under the local Yamabe flow. Firstly, we need the following lemma which obtained in \cite{MC} by Ma and the author. We present the proof here for the sake of completeness.

	\begin{lem}\cite{MC}\label{pinching_ineq}
		Suppose that $(M^n,g(t))$, for $t\in[0,T]$ $n\geq 3$, is an $n$-dimensional  locally  conformally Yamabe flow
		satisfying
		\begin{equation}\label{pinching_condition_2}
			Rc(g(t))\geq \epsilon R(g(t))g(t)>0,
		\end{equation}
		we have
\begin{eqnarray}\label{pinching_const}
	(\partial_t-(n-1)\Delta) f^{\frac{1}{\delta}}\le -3 f^{\frac{2}{\delta}},
\end{eqnarray}
where $f=\frac{|Rc|^2-\frac{1}{n}R^2}{R^{2-\delta}}$, $\delta=\frac{n\epsilon}{3}$.
\end{lem}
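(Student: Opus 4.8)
## Proof Proposal for Lemma \ref{pinching_ineq}

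The plan is to compute the evolution equation for the ``defect'' quantity $h := |Rc|^2 - \frac{1}{n}R^2$, which measures the failure of $Rc$ to be a multiple of $g$, then track the evolution of $f = h/R^{2-\delta}$, and finally of $f^{1/\delta}$. First I would use the evolution equations from Lemma \ref{chow_formula}: from $R_t = (n-1)\Delta R + R^2$ and $\partial_t R_{ij} = (n-1)\Delta R_{ij} + \frac{1}{n-2}B_{ij}$ one gets, via the Bochner-type identity $\partial_t |Rc|^2 = 2\langle Rc, \partial_t Rc\rangle$ and $\Delta |Rc|^2 = 2\langle Rc, \Delta Rc\rangle + 2|\nabla Rc|^2$, an equation of the schematic form
\begin{equation*}
\partial_t h = (n-1)\Delta h - 2(n-1)\left(|\nabla Rc|^2 - \tfrac{1}{n}|\nabla R|^2\right) + (\text{zeroth-order terms in }Rm).
\end{equation*}
The gradient term $|\nabla Rc|^2 - \frac1n|\nabla R|^2$ is nonnegative by Cauchy--Schwarz (it is the squared norm of the traceless part of $\nabla Rc$ after using the contracted second Bianchi identity in the locally conformally flat setting, where $\nabla_i R_{jk}$ is totally symmetric up to the divergence term), so it can be discarded or, better, retained to absorb bad terms coming from differentiating the denominator. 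The zeroth-order part, using the explicit form of $B_{ij}$, should simplify — under the pinching $Rc \geq \epsilon R g > 0$ — to something bounded above by a negative multiple of $h^2/R^{2}$ times a controllable factor, which is the algebraic heart of the estimate.

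Next I would carry out the quotient computation. Writing $f = h R^{\delta - 2}$ and using the product/quotient rule for the operator $\partial_t - (n-1)\Delta$, the cross terms produce $\nabla h \cdot \nabla(R^{\delta-2})$ contributions and a term $h \cdot (\partial_t - (n-1)\Delta)R^{\delta-2}$. For the latter, $(\partial_t - (n-1)\Delta)R^{\delta-2} = (2-\delta)R^{\delta-1}\cdot R^2 \cdot(\text{sign}) + (n-1)(2-\delta)(1-\delta)R^{\delta-3}|\nabla R|^2 + \dots$, so one must keep careful track of the $|\nabla R|^2$ terms: they should be dominated using the retained $|\nabla Rc|^2$ term from the $h$-equation together with the pinching-controlled lower bound $R > 0$. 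After the dust settles, the choice $\delta = \frac{n\epsilon}{3}$ is engineered precisely so that the surviving zeroth-order coefficient gives $(\partial_t - (n-1)\Delta)f \leq -c\, f^{1+1/\delta} R^{?}$ with the powers matching; then applying the chain rule once more, $(\partial_t - (n-1)\Delta)f^{1/\delta} = \frac{1}{\delta}f^{1/\delta - 1}(\partial_t - (n-1)\Delta)f - \frac{1}{\delta}(\frac{1}{\delta}-1)f^{1/\delta-2}|\nabla f|^2$, and since $f \geq 0$, the second term has a favorable sign (for $\delta \leq 1$, which holds since $\epsilon < 1/n$), so it can be dropped, leaving exactly $(\partial_t - (n-1)\Delta)f^{1/\delta} \leq -3 f^{2/\delta}$ after the constants are checked.

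I expect the main obstacle to be the purely algebraic estimate on the zeroth-order terms: showing that the contraction of $B_{ij}$ with $Rc$, combined with the $R^2$-term from the scalar curvature equation and all the pieces generated by the powers of $R$ in the denominator, collapses under the hypothesis $Rc \geq \epsilon R g$ to a clean bound of the form $-(\text{const})\cdot h^2/R^{2-2\delta}$ with a constant large enough to yield the coefficient $3$. This requires estimating quantities like $|Rc|^2$, $R_{ij}^2$ (the square as a $(0,2)$-tensor, i.e. $R_{ik}R_{kj}$ contracted), and $RR_{ij}$ against $R^2$ and $h$ using the eigenvalue pinching $\epsilon R \leq \lambda_i \leq (1-(n-1)\epsilon)R$ for the Ricci eigenvalues; in particular one needs a lower bound of the type $(n-1)|Rc|^2 g_{ij} + nRR_{ij} - n(n-1)R_{ij}^2 - R^2 g_{ij}$ contracted with $R_{ij}$ being $\leq -(\text{positive})\cdot h \cdot R$, which is the kind of inequality that holds under pinching but fails in general. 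Since this lemma is attributed to \cite{MC} and only reproduced for completeness, I would follow that reference for the precise constant-chasing, presenting the structure above and verifying that $\delta = n\epsilon/3$ makes the exponents and the final constant work out.
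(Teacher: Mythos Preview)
Your outline follows the paper's approach, but you misread what happens with the gradient terms at the chain-rule step. After rewriting the gradient piece using the identity
\[
-\frac{2}{R^{2-\delta}}|\nabla Rc|^2 - \frac{2|Rc|^2}{R^{4-\delta}}|\nabla R|^2 + \frac{2}{R^{3-\delta}}\langle\nabla R,\nabla|Rc|^2\rangle = -\frac{2}{R^{4-\delta}}|R\nabla Rc - \nabla R\, Rc|^2,
\]
there remains a cross term $\frac{2(1-\delta)(n-1)}{R}\langle\nabla f,\nabla R\rangle$ that cannot be absorbed into anything nonpositive at the $f$-level. One applies Young's inequality to it and arrives at
\[
\Box f \le \frac{(n-1)(1-\delta)}{\delta}\,\frac{|\nabla f|^2}{f} - n\epsilon\, f^{1+1/\delta},
\]
where the first term on the right is \emph{positive}. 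So you do not get a clean inequality $\Box f \le -c\,f^{1+1/\delta}$; the chain-rule term $-(n-1)\frac{1}{\delta}\bigl(\frac{1}{\delta}-1\bigr)f^{1/\delta-2}|\nabla f|^2$ is not something you may ``drop'' for having a good sign---it is required to \emph{exactly cancel} this residual gradient term. The Young parameter is chosen precisely so that the coefficients match, after which only $-\frac{n\epsilon}{\delta}f^{2/\delta}=-3f^{2/\delta}$ survives.

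On the zeroth-order side your intuition is right: the paper packages everything into $J=\frac{2}{n-2}\bigl(n(n-1)\operatorname{tr}(Rc^3)+R^3-(2n-1)R|Rc|^2\bigr)$ and uses the algebraic inequality $J\ge \frac{4}{3}n\epsilon\,R\bigl(|Rc|^2-\tfrac{1}{n}R^2\bigr)$ under the pinching (Lemma~5.3 of \cite{MC}), which is exactly the eigenvalue estimate you anticipate. The missing power you flag as $R^{?}$ is supplied by the trivial bound $f\le R^{\delta}$, which converts $\delta Rh - J \le -n\epsilon Rh$ into the pure power $-n\epsilon f^{1+1/\delta}$.
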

\begin{proof}
By (\ref{Rc}) and $|Rc|^2=g^{ik}g^{jl}R_{ij}R_{kl}$, we have
\begin{eqnarray*}
	\partial_t|Rc|^2&=&2g^{ik}g^{jl}(\partial_tR_{ij})R_{kl}+2Rg^{ik}g^{jl}R_{ij}R_{kl}\\
	&=&(n-1)\Delta |Rc|^2-2(n-1)|\nabla
	Rc|^2+6\frac{n-1}{n-2}R|Rc|^2\\
	&&-\frac{2}{n-2}R^3-\frac{2n(n-1)}{n-2}tr(Rc^3).
\end{eqnarray*}
From (\ref{scalar}), we get
\begin{eqnarray*}
	\partial_t R^2=(n-1)\Delta R^2-2(n-1)|\nabla R|^2+2R^3.
\end{eqnarray*}
Hence
\begin{eqnarray*}
	\partial_t(|Rc|^2-\frac{1}{n}R^2)
	&=&(n-1)\Delta (|Rc|^2-\frac{1}{n}R^2)-2(n-1)(|\nabla
	Rc|^2-\frac{1}{n}|\nabla R|^2)\\
	&&+6\frac{n-1}{n-2}R|Rc|^2-(\frac{2}{n-2}+\frac{2}{n})R^3-\frac{2n(n-1)}{n-2}tr(Rc^3).
\end{eqnarray*}
Now we denote $\partial_t-(n-1)\Delta$ by $\Box$ and we have
\begin{eqnarray*}
	\Box f
	&=&\frac{\Box(|Rc|^2-\frac{1}{n}R^2)}{R^{2-\delta}}
	-(2-\delta)\frac{|Rc|^2-\frac{1}{n}R^2}{R^{3-\delta}}\Box R\\
	&&-(2-\delta)(3-\delta)(n-1)\frac{|Rc|^2-\frac{1}{n}R^2}{R^{4-\delta}}|\nabla
	R|^2\\
	&&+\frac{2(2-\delta)(n-1)}{R^{3-\delta}}\langle\nabla
	R, \nabla (|Rc|^2-\frac{1}{n}R^2)\rangle \\
	&\doteq&A+\frac{1}{R^{2-\delta}}(\delta(|Rc|^2-\frac{1}{n}R^2)R-J),
\end{eqnarray*}
where
\begin{eqnarray*}
	A&\doteq&-\frac{2(n-1)}{R^{2-\delta}}(|\nabla
	Rc|^2-\frac{1}{n}|\nabla
	R|^2)-(2-\delta)(3-\delta)(n-1)\frac{|Rc|^2-\frac{1}{n}R^2}{R^{4-\delta}}|\nabla
	R|^2\\
	&&+\frac{2(2-\delta)(n-1)}{R^{3-\delta}}\langle\nabla R,
	\nabla(|Rc|^2-\frac{1}{n}R^2)\rangle
\end{eqnarray*} 
and
$J=\frac{2}{n-2}(n(n-1)tr(Rc^3)+R^3-(2n-1)R|Rc|^2)$.
Since
$$
\nabla\frac{|Rc|^2-\frac{1}{n}R^2}{R^{2-\delta}}=\frac{\nabla(|Rc|^2-\frac{1}{n}R^2)}{R^{2-\delta}}
-(2-\delta)\frac{|Rc|^2-\frac{1}{n}R^2}{R^{3-\delta}}\nabla R,
$$
we get
\begin{align*}
	\frac{A}{n-1} &=-\frac{2 }{R^{2-\delta}}(|\nabla
	Rc|^2-\frac{1}{n}|\nabla R|^2)-(2-\delta)(1+\delta)
	\frac{|Rc|^2-\frac{1}{n}R^2}{R^{4-\delta}}|\nabla
	R|^2\\
	&+\frac{2(1-\delta)}{R} \langle\nabla R,
	\nabla\frac{|Rc|^2-\frac{1}{n}R^2}{R^{2-\delta}}\rangle+\frac{2}{R^{3-\delta}}
	\langle\nabla R, \nabla(|Rc|^2-\frac{1}{n}R^2)\rangle.
\end{align*}
Since
$$
-\frac{2}{R^{2-\delta}}|\nabla
Rc|^2-2\frac{|Rc|^2}{R^{4-\delta}}|\nabla
R|^2+\frac{2}{R^{3-\delta}}\langle\nabla R, \nabla
|Rc|^2\rangle=-\frac{2}{R^{4-\delta}}|R\nabla Rc-\nabla R  Rc |^2,
$$
so we have
\begin{align}\label{A}
	\frac{A}{n-1} &=\frac{2(1-\delta)
	}{R}\langle\nabla\frac{|Rc|^2-\frac{1}{n}R^2}{R^{2-\delta}},\nabla R\rangle
	-\frac{2 }{R^{4-\delta}}|R\nabla Rc-\nabla R  Rc |^2\nonumber\\
	&-\frac{(1-\delta)\delta
	}{R^{4-\delta}}(|Rc|^2-\frac{1}{n}R^2)|\nabla R|^2.
\end{align}
Then we conclude that
\begin{eqnarray}
\Box f&=&\frac{2(1-\delta)(n-1)}{R}\langle\nabla
f,\nabla R\rangle-\frac{2(n-1)}{R^{4-\delta}}|R\nabla Rc-\nabla R  Rc |^2\nonumber\\
&&-\frac{(1-\delta)\delta(n-1)}{R^{4-\delta}}(|Rc|^2-\frac{1}{n}R^2)|\nabla
R|^2\nonumber\\
&&+\frac{1}{R^{2-\delta}}(\delta
R(|Rc|^2-\frac{1}{n}R^2)-J)\nonumber
\\
 &\le&\frac{2(1-\delta)(n-1)}{R}\langle\nabla
 f,\nabla R\rangle-(1-\delta)\delta(n-1)\frac{|\nabla
 	R|^2}{R^{2}}f\nonumber\\
 &&+\frac{1}{R^{2-\delta}}(\delta
 R(|Rc|^2-\frac{1}{n}R^2)-J)\nonumber\\
  &\le&\frac{(n-1)(1-\delta)}{\delta}\frac{|\nabla f|^2}{f}+\frac{1}{R^{2-\delta}}(\delta
  R(|Rc|^2-\frac{1}{n}R^2)-J),\nonumber
\end{eqnarray}
where we use $\frac{1}{R}\langle\nabla f,\nabla R\rangle \le \frac{1}{2\delta}\frac{|\nabla f|^2}{f}+\frac{\delta}{2}\frac{|\nabla R|^2}{R^2}f$.
Since
 $J\ge \frac{4}{3}n\epsilon  R (|Rc|^2-\frac{1}{n}R^2)$
by (\ref{pinching_condition_2}) and Lemma 5.3 in \cite{MC},
we get
\begin{align}\label{key}
\Box f \le \frac{(n-1)(1-\delta)}{\delta}\frac{|\nabla f|^2}{f}-n\epsilon f^{1+\frac{1}{\delta}},
\end{align}
where we use $f\le R^{\delta}$ in the above inequality.
Then (\ref{pinching_const}) follows from (\ref{key}) directly.
\end{proof}

	\begin{lem}\label{key_lemma}
	Let $M^n$ be a non-compact  manifold and $g(t), t \in$ $[0, T]$ is a smooth solution to the locally conformally flat Yamabe flow on $M$ such that for some $x_0 \in M$,  $B_{g(t)}\left(x_0, 1\right) \Subset M$ for $t \in[0, T]$ satisfying
	\begin{equation}\label{pinching_condition_1}
	\quad Rc(g(t)) \geq \left(\epsilon  R(g(t))-1\right)g(t)
	\end{equation}
			 on $ B_{g(t)}\left(x_0, 1\right), t \in[0, T].$
			Then there exist $\alpha(\epsilon,n)$, $S(\epsilon,n)>0$ such that for $t \in\left(0, \min\{T, S\}\right]$ we have
		$$
		\left| Rm\left(g(x_0, t)\right)\right| \leq \frac{\alpha}{t}.
		$$
	\end{lem}

\begin{proof}
We adapt the idea of \cite{LP} and argue by the contradition.
Suppose the conclusion is false, then there exist a sequence of $n$-dimensional locally conformally flat  manifolds $M_k$, Yamabe flows $g_k(t), t \in\left[0, T_k\right]$ on $M_k$ and points $x_k \in M_k$ satisfying

(i) $B_{g_k(t)}\left(x_k, 1\right) \Subset M_k$ for $t \in\left[0, t_k\right]$ and $t_k \in\left(0, T_k\right]$;

(ii) $Rc(g_k(t)) \geq \left(\epsilon  R(g_k(t))-1\right)g_k(t)$ on $B_{g_k(t)}\left(x_k, 1\right)$ for $t \in\left[0, t_k\right]$;

(iii) $\left| Rm\left(g_k(x_k,t)\right)\right|<\alpha_k t^{-1}$ for $t \in\left(0, t_k\right)$;

(iv) $\left| Rm\left(g_k\left(x_k,t_k\right)\right)\right|=\alpha_k t_k^{-1}$;

(v) $\alpha_k t_k \rightarrow 0$.

By (iv) and the fact that $a_k t_k \rightarrow 0$, a point-picking argument of Theorem 5.1 in \cite{T2} implies that for $k$ large enough, we can find $\beta>0, \tilde{t}_k \in\left(0, t_k\right]$ and $\tilde{x}_k \in B_{g_k\left(\tilde{t}_k\right)}\left(x_k, \frac{3}{4}-\frac{1}{2} \beta \sqrt{\alpha_k \tilde{t}_k} \right)$ such that
 $$
 \left| Rm\left(g_k(x, t)\right)\right| \leq 4
 | Rm\left(g_k\left(\tilde{x}_k, \tilde{t}_k\right)\right) |=4 Q_k
 $$
whenever $d_{g_k\left(\tilde{t}_k\right)}\left(x, \tilde{x}_k\right)<\frac{1}{8} \beta \alpha_k Q_k^{-1 / 2}$ and $\tilde{t}_k-\frac{1}{8} \alpha_k Q_k^{-1} \leq t \leq \tilde{t}_k$ where $\tilde{t}_k Q_k \geq\alpha_k \rightarrow+\infty$.

We consider the  rescaling sequence  $\tilde{g}_k(t)=Q_k g_k\left(\tilde{t}_k+Q_k^{-1} t\right)$ for $t \in\left[-\frac{1}{8} \alpha_k, 0\right]$ such that

(a) $\left| Rm(\tilde{g}_k(\tilde{x}_k,0))\right|=1$;

(b) $\left| Rm(\tilde{g}_k(t))\right| \leq 4$ on $B_{\tilde{g}_k(0)}\left(\tilde{x}_k, \frac{1}{8} \beta \alpha_k\right) \times\left[-\frac{1}{8} \alpha_k, 0\right]$, and

(c)  $Rc(\tilde{g}_k(t)) \geq \left(\epsilon  R(\tilde{g}_k(t))-Q_k^{-1}\right)\tilde{g}_k(t)$  for $t \in\left(0, t_k\right)$ on $B_{\tilde{g}_k(0)}\left(\tilde{x}_k, \frac{1}{8} \beta \alpha_k\right)$,

By (b), we have a universal $\rho>0$ so that the conjugate radius on $\left(B_{\tilde{g}_k(0)}\left(\tilde{x}_k, \frac{1}{8} \beta \alpha_k-\rho\right), \tilde{g}_k(t)\right) $ with $t\in\left[-\frac{1}{8} \alpha_k, 0\right]$ is always larger than $\rho$. Notice that we have no uniform lower bound on the injectivity radius of $\tilde{g}_k(0)$.
 Therefore we can lift $\left(B_{\tilde{g}_k(0)}\left(\tilde{x}_k, \rho\right), \tilde{g}_k(t)\right)$ to $\left(B_{\text {euc }}(o_{\tilde{x}_k},\rho),\phi_{\tilde{x}_k}^*\tilde{g}_k(t)\right)$ by the exponential map of $\tilde{g}_k(0)$ at $\tilde{x}_k$,
where $\phi_k=exp_{\tilde{x}_k,\tilde{g}_k(0)}$ and $B_{\text {euc }}(o_{\tilde{x}_k},\rho)$ is the Euclidean ball on the tangent space at $\tilde{x}_k$. Then we deduce that $\left(B_{\text {euc }}(o_{\tilde{x}_k},\rho),\phi_{\tilde{x}_k}^* \tilde{g}_k(t)\right)$
subconverges to $(B_{\text {euc }}(o_{x_{\infty}},\rho), \tilde{g}_{\infty}(t) )$ in  $C^{\infty}$-sense. It follows from (c) and $\epsilon <\frac{1}{n}$ that
$$
 R\left(\tilde{g}_{\infty}(t)\right)  \geq 0 ;
$$
and
$$
 Rc\left(\tilde{g}_{\infty}(t)\right)  \ge\epsilon R\left(\tilde{g}_{\infty}(t)\right) \tilde{g}_{\infty}(t),
$$
on $B_{\text {euc }}(o_{x_{\infty}},\rho)\times (-\infty,0]$.
If $R(\tilde{g}_{\infty}(t))=0$ at some point in $B_{\text {euc }}(o_{x_{\infty}},\rho)\times (-\infty,0]$, then we get $R(\tilde{g}_{\infty}(t))\equiv0$ by applying the strong maximum principle to (\ref{scalar}). Since  $\tilde{g}_{\infty}(t)$ is locally conformally flat and $Rc\left(\tilde{g}_{\infty}(t)\right)$ is nonnegative, we conclude that   $ Rm\equiv 0$ on $B_{\text {euc }}(o_{x_{\infty}},\rho)\times (-\infty,0]$ which contradicts to (a).
By Lemma \ref{pinching_ineq}, we get
$$
	(\partial_t-(n-1)\Delta) f_{\infty}^{\frac{1}{\delta}}\le -3 f_{\infty}^{\frac{2}{\delta}}
$$
on $B_{\text {euc }}(o_{x_{\infty}},\rho)\times (-\infty,0]$,
where $f_{\infty}=\frac{|Rc|^2-\frac{1}{n}R^2}{R^{2-\delta}}\left(\tilde{g}_{\infty}(t)\right)$, $\delta=\frac{n\epsilon}{3}$. In this case, we can show the maximum principle still holds and we will give the proof by using the spirit of Omori-Yau maximum principle; see Theorem \ref{maximun_principle} in the appendix. Notice that $f_k=\frac{|Rc|^2-\frac{1}{n}R^2}{R^{2-\delta}}\left(\tilde{g}_{k}(t)\right)\le R^{\delta}\left(\tilde{g}_{k}(t)\right)\le C$ on $B_{\tilde{g}_k(0)}\left(\tilde{x}_k, \frac{1}{8} \beta \alpha_k\right) \times\left[-\frac{1}{8} \alpha_k, 0\right]$  and using Theorem \ref{maximun_principle} to $f_{\infty}$ on $[-T,0]$, we get
$
\sup f_{\infty}^{\frac{1}{\delta}}(0)\le \frac{1}{3T}.
$
Let $T\to +\infty$ and then we have $f_{\infty}(0)\equiv 0$ and $Rc\left(\tilde{g}_{\infty}(0)\right)\equiv c>0$ on $B_{\text {euc }}(o_{x_{\infty}},\rho)$.

Now we can use the same arguments as Lemma 3.3 in \cite{LP} to extend this control to a large region.  For any fixed $r>\rho$ and each $k$, we take a maximal disjoint collection of balls $B_{\tilde{g}_k(0)}\left(y_k^j, \rho\right)$ within $B_{\tilde{g}_k(0)}\left(\tilde{x}_k, r\right)$, indexed by $j$. By volume comparison, the number of points $y_k^j$ is bounded uniformly in $k$.
We can use the previous process of lifting by the exponential map on  $B_{\tilde{g}_k(0)}\left(y_k^j, \rho\right)$ and subconverges to give a limit that is of constant Ricci curvature. By a standard diagonal argument and  because of the overlaps between the covering balls $B_{\tilde{g}_k(0)}\left(y_k^j, \rho\right)$ we deduce that each limit has the same constant sectional curvature. In particular,  we have $\operatorname{Ric}\left(\tilde{g}_k(0)\right)>\frac{c}{2}>0$ on $B_{\tilde{g}_k(0)}\left(\tilde{x}_k, r\right)$ for all $k$ sufficient large.
However, by the Bonnet-Myers theorem, we have any length of minimal geodesics in $B_{\tilde{g}_k(0)}\left(\tilde{x}_k, r\right)$ are less than $\frac{\pi}{\sqrt{\frac{c}{2}}}$  which is impossible when $r$ is sufficient large as $\alpha_k\to \infty$.
\end{proof}

	\section{Existence for the local Yamabe flow }
In this section we prove the following existence theorem
for the local Yamabe flow. Noted that the existence time of local Yamabe flow in Theorem \ref{main1} is uniform and dependent only  on $\epsilon$ and $n$. We can exploit Theorem \ref{main1} to get the existence of Yamabe flow on complete noncompact locally conformally flat manifolds satisfying (\ref{pinching_cond_intro}); see the proof of Theorem \ref{main} in section \ref{proof}.
	
	\begin{thm}\label{main1}
	For all $\frac{1}{n}>\epsilon>0$, there exist $T(\epsilon,n), Q(\epsilon,n)>0$ such that the following holds. Let $\left(M^n, g_0\right)$ be an $n$-dimensional locally conformally flat manifold  (not necessarily complete)  and $p \in M$ such that
	$B_{g_0}(p, r+4) \Subset M$ for some $r \geq 1$ and
	\begin{equation}\label{pinching_cond_intro}
		Rc\left(g_0\right) \geq \epsilon  R\left(g_0\right)g_0\ge 0
	\end{equation}
	on $B_{g_0}(p, r+4)$.		
	Then there exists a smoothly locally conformally flat  Yamabe flow solution $g(t)$ defined on $B_{g_0}(p, r) \times[0, T]$ such that $g(0)=g_0$,
	$$
	| Rm(g(x, t))| \leq \frac{Q}{t}
	$$ and
	$$
	Rc(g(x, t)) \geq \left(\epsilon  R(g(x, t))-1\right)g(x,t)
	$$
	for all $(x, t) \in B_{g_0}(p, r) \times(0, T]$, where $Rm$ denotes the Riemannian curvature tensor.
\end{thm}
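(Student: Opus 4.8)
The plan is to follow the Yamabe flow analogue of the "local Ricci flow existence with curvature control" strategy of Deruelle–Schulze–Simon and Lee–Topping, adapted to the conformally flat setting where the flow reduces to the scalar fast-diffusion/heat-type PDE \eqref{yamabe_flow_u}. The core ingredients should be: (i) a local short-time existence statement for Yamabe flow on the ball $B_{g_0}(p,r+4)$ — since the metric is locally conformally flat we are really solving a quasilinear parabolic equation for the conformal factor $u$, with $g_0$ essentially Euclidean (modulo the conformal factor), so we can appeal to standard parabolic theory once we have appropriate barriers; (ii) a local "pseudolocality"-type or Shi-type estimate giving $|Rm(g(t))| \le Q/t$ on a slightly smaller ball, using the pinching \eqref{pinching_cond_intro} to control geometry; and (iii) a preserved-pinching argument showing $Rc \ge (\epsilon R - 1)g$ is maintained along the flow.

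First I would set up the local existence. Because $g_0$ is locally conformally flat with $Rc(g_0) \ge 0$ on the ball, one can write $g_0 = v^{4/(n-2)} g_{\mathrm{eucl}}$ locally and then the Yamabe flow metric $g(t) = u(t)^{4/(n-2)} g_{\mathrm{eucl}}$, so that $u$ solves $\partial_t u^N = \Delta_{\mathrm{eucl}} u$ (a porous-medium/fast-diffusion type equation, here with exponent $N = \frac{n+2}{n-2}$). I would construct a solution on $B_{g_0}(p,r) \times [0,T]$ by an exhaustion/barrier argument: use the pinching to get a priori bounds on $R(g_0)$ locally (note $Rc \ge \epsilon R g \ge 0$ forces a definite structure), then solve the PDE on slightly shrinking balls with Dirichlet-type boundary data matching $u \equiv $ (comparison barrier), and pass to a limit. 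The buffer of width $4$ in $B_{g_0}(p,r+4)$ is there precisely to run a cutoff/localization and to keep the solution from feeling the boundary for a definite time $T(\epsilon,n)$.

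Next, the curvature bound $|Rm(g(t))| \le Q/t$. In the locally conformally flat case $Rm$ is determined algebraically by $Rc$ (hence by $R$ and the traceless Ricci), and along the Yamabe flow $R$ satisfies the reaction-diffusion equation $\partial_t R = (n-1)\Delta R + R^2$ — this is exactly the equation alluded to in the Remark and in Theorem \ref{chow_formula} referenced in the text. So it suffices to get a local $1/t$ bound on $R$ (and on the full curvature via pinching, since $Rc \ge \epsilon R g$ together with locally conformally flat pins down all of $Rm$ in terms of $R$ up to a factor depending on $n$ and $\epsilon$). I would obtain this by a localized maximum principle / Bernstein-type argument applied to $t\,R$ times a cutoff function supported in $B_{g_0}(p,r)$, using the good structure of the evolution equation for $R$ and the fact that $R \ge 0$ is preserved. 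The parabolic smoothing of the heat-type operator is what produces the decay in $t$.

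Finally, the preserved pinching inequality $Rc \ge (\epsilon R - 1)g$. Here I would use the tensor maximum principle (Hamilton's) applied to the evolution of the Ricci tensor under Yamabe flow in the locally conformally flat case — in that case the reaction terms are again controlled algebraically, and the cone $\{Rc \ge \epsilon R g\}$ is known to be (weakly) preserved for Yamabe flow on closed manifolds (this is in the spirit of Chow's and the author–Ma's earlier work). The $-1\cdot g$ slack on the right is an allowance for the localization error: since we only flow on a ball with a cutoff, the maximum principle picks up a controlled error term, and choosing $T(\epsilon,n)$ small enough keeps that error below $1$.

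The main obstacle I expect is step (ii)–(iii) done \emph{locally}: getting the $Q/t$ curvature estimate and the preserved pinching simultaneously on a ball without any global curvature bound on $g_0$. The two are coupled — the curvature bound is needed to justify the maximum-principle computations (e.g.\ to know the cutoff terms are integrable/controlled and the flow stays smooth), while the pinching is needed to convert a bound on $R$ into a bound on $|Rm|$. I would handle this by a continuity/bootstrap argument: on the maximal time interval where $|Rm| \le 2Q/t$ holds, prove that in fact $|Rm| \le Q/t$ and that the pinching with slack $1$ holds, thereby showing the interval cannot end before $T$. Making the constants $T(\epsilon,n)$, $Q(\epsilon,n)$ genuinely independent of $r \ge 1$ and of the point $p$ — which is what the later compactness argument in Theorem \ref{main} needs — is the delicate bookkeeping point.
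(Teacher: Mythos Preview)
Your sketch captures the overall goal but misses the two technical mechanisms that actually drive the paper's proof, and the bootstrap you propose would stall without them.

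First, the local existence is not obtained by solving the PDE directly on the ball with barriers. Instead the paper uses Hochard's conformal completion (Theorem~\ref{Ho}) to turn the incomplete metric on $B_{g_0}(p,r+4)$ into a complete metric with bounded curvature, then runs a \emph{complete} Yamabe flow via Theorem~\ref{short_existence}. This is repeated inductively: at time $t_k$ one has $|Rm|\le \alpha/t_k$ on $B_{g_0}(p,r_k)$, completes again, and extends the flow to $t_{k+1}=(1+\beta\alpha^{-1})t_k$ on a ball of radius $r_{k+1}=r_k-\Gamma\sqrt{Qt_k}$. The geometric series in $\sqrt{t_k}$ is what makes $T(\epsilon,n)$ uniform; a one-shot barrier argument does not obviously produce a lifespan independent of $\sup|Rm(g_0)|$.

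Second, and more seriously, your steps (ii)--(iii) both rest on having a cutoff $\psi$ with $(\partial_t-(n-1)\Delta)\psi\le 0$ in the barrier sense, and this is exactly where Yamabe flow differs from Ricci flow. Since $\partial_t g = -Rg$ is not a super Ricci flow, Perelman's distance distortion estimate fails, and a naive ``$tR$ times cutoff'' argument has no way to control the evolution of the cutoff. The paper's key new input (Lemmas~\ref{cut_off_1}, \ref{shrinking}, \ref{cut_off_2}) is that the pinching $Rc\ge(\epsilon R-\lambda)g$ itself yields a usable bound $(\partial_t-(n-1)\Delta)d \ge -C_1/\sqrt{t}-C_2\,d$, which then feeds into a cutoff of the form $\phi(e^{ct}d+b\sqrt{t})$. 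Likewise the $|Rm|\le\alpha/t$ improvement (Lemma~\ref{key_lemma}) is not a Bernstein estimate but a point-picking/blow-up argument: one rescales a putative violator, lifts through the exponential map to bypass the lack of an injectivity radius bound, and shows via Lemma~\ref{pinching_ineq} and a bespoke maximum principle (Theorem~\ref{maximun_principle}) that the limit has constant positive Ricci curvature, contradicting Bonnet--Myers. Without these two ingredients the coupling you describe between the curvature bound and the pinching cannot be closed.
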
	
\begin{proof}
 For the given pinching constant $\epsilon$, let
	$\Lambda$, $\beta$ be the constants from Theorem \ref{short_existence}; and
	$\alpha$, $S$ be the constants from Lemma \ref{key_lemma}. Take
	$Q:=\max \left\{\Lambda \beta, \Lambda\left(\alpha+\beta\right), 1\right\}$ and 	$\mu=\sqrt{1+\beta \alpha^{-1}}-1>0$.
	Let $\Gamma\ge 1$ be a positive constant which we will determine later.
		
		Choose $1 \geq \rho_0>0$ sufficiently small so that for all $x \in B_{g_0}(p, r+4)$, we have $\left| Rm\left(g_0\right)\right| \leq \rho_0^{-2}$. By Theorem \ref{Ho}, applied with $N=B_{g_0}(p, r+4)$, we  have a complete metric $h$ on $N$ such that $h\equiv g_0$ on $B_{g_0}(p, r+3) $ and $| Rm(h))|\le \sigma\rho_0^{-2}$ on $N$. It follows from Theorem \ref{short_existence} that we can find a complete smooth solution $h(t)$ to the Yamabe flow on $N\times\left[0, \beta \rho_0^2\right]$ and denote $g(t)=h(t)$  on $B_{g_0}(p, r+3) \times\left[0, \beta \rho_0^2\right]$ with
	initial data $g(0)=g_0$,	
		 $$| Rm(g(t))| \leq \Lambda \rho_0^{-2}$$ 	and
		\begin{equation*}
			Rc(g(t)) \geq \left(\epsilon  R(g(t))-\left(\frac{1}{4} \Gamma \sqrt{Q \beta \rho_0^2}\right)^{-2}\right)g(t)
		\end{equation*}
		on
		$B_{g_0}(p, R+$ $3) \times\left[0, \beta \rho_0^2\right]$.
		
		We now define sequences of times $t_k$ and radii $r_k$ inductively as follows:\\
		(a) $t_1=\beta \rho_0^2, r_1=r+3$;\\
		(b) $t_{k+1}=(1+\mu)^2 t_k=\left(1+\beta \alpha^{-1}\right) t_k$ for $k \geq 1$;\\
		(c) $r_{k+1}=r_k-\Gamma \sqrt{Q t_k}$ for $k \geq 1$.
		
		Let $\mathcal{P}(k)$ be the following statement: there is a smoothly locally conformally flat Yamabe flow solution $g(t)$ defined on $B_{g_0}\left(p, r_k\right) \times\left[0, t_k\right]$ with $g(0)=g_0$ such that
		 $$
		 | Rm(g(t))| \leq \frac{Q}{t},
		 $$
		on $B_{g_0}\left(p, r_k\right) \times\left[0, t_k\right]$
		and
		\begin{equation}\label{key_pinching_1}
			Rc(g(t)) \geq \left(\epsilon  R(g(t))-\left(\frac{1}{4} \Gamma \sqrt{Q t_k}\right)^{-2}\right)g(t)
		\end{equation}
		on
		$B_{g_ 0}\left(p, r_{k}-\frac{1}{2} \Gamma \sqrt{Q t_k}\right) \times\left[0, t_k\right]$.
		
Since  $Q \geq \Lambda \beta$, 	we have proved $\mathcal{P}(1)$ is true. Our goal is to show that $\mathcal{P}(k)$ is true for all $k$ provided $r_k>0$.
		We now perform an inductive argument. Suppose that $\mathcal{P}(k)$ is true, we next show that $\mathcal{P}(k+1)$ is true provided that $r_{k+1}>0$.
		For any $x \in B_{g_0}\left(p, r_{k}-\frac{1}{2} \Gamma \sqrt{Q t_k}\right)$,
		we have
		$
		B_{g_0}\left(x, \frac{1}{4} \Gamma \sqrt{Q t_k}\right) \Subset B_{g_0}\left(p, r_k\right) .
		$
		 Consider the rescaled Yamabe flow $\bar{g}(t)=\lambda_1^{-2} g\left(\lambda_1^2 t\right), t \in\left[0,16 \Gamma^{-2} Q^{-1}\right]$ where $\lambda_1=\frac{1}{4} \Gamma \sqrt{Q t_{k}}$ such that $B_{g_0}\left(x, \frac{1}{4} \Gamma \sqrt{Q t_{k}}\right)=B_{\bar{g}(0)}(x, 1)$
		 	and
		 	\begin{equation*}
		 	Rc(\bar{g}(t)) \geq \left(\epsilon  R(\bar{g}(t))-1\right)\bar{g}(t)
		 	\end{equation*}
		 	on
		 	$B_{\bar{g}(0)}(x, 1) \times\left[0, 16 \Gamma^{-2} Q^{-1}\right]$.
		 	 If we choose $\Gamma$ such that  $S\geq 16 \Gamma^{-2} Q^{-1}$,
 we can apply Lemma \ref{key_lemma} over the whole time interval $\left[0,16 \Gamma^{-2} Q^{-1}\right]$ to get
$$
| Rm(\bar{g}(x,t))| \leq \frac{\alpha}{t},
$$
	for
$t\in \left[0, 16 \Gamma^{-2} Q^{-1}\right]$.
		Rescaling the conclusion of  back to $g(t)$ shows that
	 \begin{equation}\label{equation_1}
| Rm(g(t))| \leq \frac{\alpha}{t},
	 \end{equation}
	on $B_{g_0}\left(x, r_{k}-\frac{1}{2} \Gamma \sqrt{Q t_k}\right) \times\left[0, t_k\right]$.

		Denote $N=B_{g_0}\left(p, r_{k}-\frac{1}{2}  \Gamma \sqrt{Q t_k}\right)$ so that for $h_0=g\left(t_k\right)$, estimate (\ref{equation_1}) gives
		$$
		\sup\limits _{N}\left| Rm\left(h_0\right)\right| \leq \rho^{-2}
		$$
		 where $\rho=\sqrt{t_k \alpha^{-1}}$. Moreover, for $y \in B_{g_0}\left(p, r_{k+1}\right)$, we again consider the rescaled Yamabe flow $\bar{g}(t)=\lambda_1^{-2} g\left(\lambda_1^2 t\right), t \in\left[0,16 \Gamma^{-2} Q^{-1}\right]$ where $\lambda_1=\frac{1}{4} \Gamma \sqrt{Q t_{k}}$ such that $B_{g_0}\left(y, \frac{1}{4} \Gamma \sqrt{Q t_{k}}\right)=B_{\bar{g}(0)}(y, 1)$
		 and
		 \begin{equation*}
		 	Rc(\bar{g}(t)) \geq \left(\epsilon  R(\bar{g}(t))-1\right)\bar{g}(t).
		 \end{equation*}
		 on
		 $B_{\bar{g}(0)}(y, 1) \times\left[0, 16 \Gamma^{-2} Q^{-1}\right]$.
		 By the Lemma \ref{shrinking} and if we choose $\Gamma$ such that $\Gamma \sqrt{Q \alpha} \geq 16$, $\gamma \Gamma^{-1}\le \frac{1}{32}$ and $e^{-16\zeta \Gamma^{-2}Q^{-1}}\ge \frac{3}{4}$, we conculde that
		 $$ B_{\bar{g}\left(16\Gamma^{-2}Q^{-1}\right)}\left(y, \frac{\rho}{\frac{1}{4}\Gamma \sqrt{Qt_k}}\right) \subset B_{\bar{g}\left(16\Gamma^{-2}Q^{-1}\right)}\left(y, \frac{1}{4}\right) \subset B_{\bar{g}_0}\left(y, \frac{1}{2}\right) \Subset N,$$
		 and rescaling the conclusion of  back to $g(t)$ shows that
		 $$B_{g\left(t_k\right)}(y, \rho) \subset B_{g\left(t_k\right)}\left(y, \frac{1}{16} \Gamma \sqrt{Q t_k}\right) \subset B_{g_0}\left(y, \frac{1}{8} \Gamma \sqrt{Q t_k}\right) \Subset N.$$
		 This shows that $B_{g_0}\left(p, r_{k+1}\right) \subset N_\rho$, where $N_\rho=\left\{x \in N: B_{g\left(t_k\right)}(x, \rho) \Subset N\right\}$. Hence, we may apply Theorem \ref{Ho} and Theorem \ref{short_existence} again to find a Yamabe flow $g(t)$ on $B_{g_0}\left(p, r_{k+1}\right) \times\left[t_k, t_k+\beta \rho^2\right]$, extending the existing $g(t)$ on this smaller ball, with
		\begin{equation}\label{kand1}
			| Rm(g(t))| \leq \Lambda \rho^{-2}= \frac{\Lambda \alpha}{t_k} \leq \frac{Q}{t}
		\end{equation}
		since $\Lambda\left(\alpha+\beta\right) \leq Q$ and $t_k+\beta \rho^2=t_k\left(1+\beta \alpha^{-1}\right)=t_{k+1}$.
		
		Next we aim to prove $$
		Rc(g(t)) \geq \left(\epsilon  R(g(t))-\left(\frac{1}{4} \Gamma \sqrt{Q t_{k+1}}\right)^{-2}\right) g(t)
		$$
		on
		$B_{g_ 0}\left(p, r_{k+1}-\frac{1}{2} \Gamma \sqrt{Q t_{k+1}}\right) \times\left[0, t_{k+1}\right]$.
		
		For any $x_0\in B_{g_ 0}\left(p, r_{k+1}-\frac{1}{2} \Gamma \sqrt{Q t_{k+1}}\right) \times\left[0, t_{k+1}\right]$, we now consider the rescaled Yamabe flow $\tilde{g}(t)=\lambda_2^{-2} g\left(\lambda_2^2 t\right), t \in\left[0,16 \Gamma^{-2} Q^{-1}\right]$ where $\lambda_2=\frac{1}{4} \Gamma \sqrt{Q t_{k+1}}$ so that $B_{g_0}\left(x_0, \frac{1}{4} \Gamma \sqrt{Q t_{k+1}}\right)=B_{\tilde{g}(0)}(x_0, 1)$. Since $B_{g_0}\left(x_0, \frac{1}{4} \Gamma \sqrt{Q t_{k+1}}\right)\subset B_{g_ 0}\left(p, r_{k+1}-\frac{1}{2} \Gamma \sqrt{Q t_{k+1}}\right)\subset B_{g_ 0}\left(p, r_{k}-\frac{1}{2} \Gamma \sqrt{Q t_{k}}\right)$,
		we get from (\ref{key_pinching_1}) that
		$$
		Rc(\tilde{g}( t)) \geq \epsilon \left( R(\tilde{g}( t))-(1+\mu)^2\right)\tilde{g}(t)
		$$
		on $B_{\tilde{g}(0)}(x_0, 1)\times\left[0,16 (1+\mu)^{-2}\Gamma^{-2} Q^{-1}\right]$. By Lemma \ref{cut_off_1} and taking $r_0=\sqrt{t}$, we have
		\begin{equation}\label{3_1}
			\left(\frac{\partial}{\partial t}-(n-1)\Delta_{\tilde{g}(t)}\right) d_{\tilde{g}(t)}\left(x, x_0\right) \geq- \frac{2(n-1)(Q+1)}{\epsilon\sqrt{t}}-\frac{(1+\mu)^2}{\epsilon}d_{g(t_0)}\left(x, x_0\right)
		\end{equation}
		for any $(x,t)\in \left(B_{\tilde{g}(0)}(x_0, 1)\setminus B_{\tilde{g}(t)}(x_0, 2\sqrt{t})\right)\times\left[0,16 (1+\mu)^{-2}\Gamma^{-2} Q^{-1}\right]$. Moreover,  by (\ref{kand1}) we have
		$$
		| Rm(\tilde{g}(t))|  \leq 16^{-1} (1+\mu)^{2}\Gamma^{2} Q^{2}
		$$
		for $(x,t)\in B_{\tilde{g}(0)}(x_0, 1)\times\left[16 (1+\mu)^{-2}\Gamma^{-2} Q^{-1},16 \Gamma^{-2} Q^{-1}\right]$. Then by
		Lemma \ref{cut_off_2},  we have
		\begin{equation}\label{3_2}
			\left(\frac{\partial}{\partial t}-(n-1)\Delta_{\tilde{g}(t)}\right) d_{\tilde{g}(t)}\left(x, x_0\right)\geq-\frac{(n-1)(Q+1)}{\sqrt{t}}-n(n-1)16^{-1}\Gamma^{2} Q^{2} (1+\mu)^{2}d_{g(t_0)}\left(x, x_0\right)
		\end{equation}
		in the barrier sense for any $(x,t)\in \left(B_{\tilde{g}(0)}(x_0, 1)\setminus B_{\tilde{g}(t)}(x_0, 2\sqrt{t})\right)\times\left[16 (1+\mu)^{-2}\Gamma^{-2} Q^{-1},16 \Gamma^{-2} Q^{-1}\right]$. If we choose $\Gamma$ such that $n(n-1)16^{-1}\Gamma^{2} Q^{2}\ge \frac{1}{\epsilon}$, we conclude from (\ref{3_1}) and (\ref{3_2}) that
		\begin{equation*}
			\left(\frac{\partial}{\partial t}-(n-1)\Delta_{\tilde{g}(t)}\right) d_{\tilde{g}(t)}\left(x, x_0\right) \geq- \frac{c_1}{\sqrt{t}}-c_1\Gamma^2d_{\tilde{g}(t)}\left(x, x_0\right)
		\end{equation*}
		in the barrier sense for any $(x,t)\in \left(B_{\tilde{g}(0)}(x_0, 1)\setminus B_{\tilde{g}(t)}(x_0, 2\sqrt{t})\right)\times\left[0,16 \Gamma^{-2} Q^{-1}\right]$, where $c_1$ is the positive constant depending only on $\epsilon$ and $n$. It follows that
		\begin{equation*}
			\left(\frac{\partial}{\partial t}-(n-1)\Delta_{\tilde{g}(t)}\right) \left(e^{c_1\Gamma^2t}d_{\tilde{g}(t)}\left(x, x_0\right)+b\sqrt{t}\right) \geq 0
		\end{equation*}
		in the barrier sense for any $(x,t)\in \left(B_{\tilde{g}(0)}(x_0, 1)\setminus B_{\tilde{g}(t)}(x_0, 2\sqrt{t})\right)\times\left[0,16 \Gamma^{-2} Q^{-1}\right]$, where $b=2c_1e^{16c_1Q^{-1}}$.
		
		 Now take a
		cut-off function $\phi:[0,\infty]\to [0,1]$  such that $\phi \equiv 1$ on $[0, \frac{1}{2}]$, $\phi \equiv 0$ on $[1, \infty)$ and
		$$
		\phi^{\prime} \leq 0, \quad (\phi^{\prime})^2\le 10 \phi,\quad\phi^{\prime\prime} \ge - 10 \phi.
		$$
		To construct $\phi$ we can take 	
		$$
		\phi(y)= \begin{cases}1, &  y \le \frac{1}{2}; \\
			1-8(y-\frac{1}{2})^2,	  & \frac{1}{2} \le  y \le \frac{3}{4} ;\\
			8(y-1)^2, & \frac{3}{4} \le  y \le  1; \\
			0, & y\ge 1 ,
		\end{cases}
		$$	
		and smooth it slightly.	
		Setting
		$$
		\psi(x,t)=e^{-\frac{5(n-1)}{2}t}\phi\left(\frac{e^{c_1\Gamma^2t}d_{\tilde{g}(t)}\left(x, x_0\right)+b\sqrt{t}}{2e^{16c_1Q^{-1}}}\right).
		$$
	If choose  $\Gamma$ such that $\frac{1}{8}+b\Gamma^{-1}Q^{-\frac{1}{2}}e^{-16c_1Q^{-1}}\le 1$ and $4(2+be^{-16c_1Q^{-1}})\Gamma^{-1}Q^{-\frac{1}{2}}\le 1$, we have $\text{supp}(\psi)\subset B_{\tilde{g}(t)}(x_0,\frac{1}{4})$ and $B_{\tilde{g}(t)}(x_0,2\sqrt{t})\subset \{\phi=1\}$ for $t\in \left[0,16 \Gamma^{-2} Q^{-1}\right]$.
		Then
		we have
		\begin{equation}
			\left(\frac{\partial}{\partial t}-(n-1)\Delta_{\tilde{g}(t)}\right)\psi(x,t)\le 0,
		\end{equation}
		in the barrier sense on $B_{\tilde{g}(0)}(x_0, 1)\times\left[0,16 \Gamma^{-2} Q^{-1}\right]$.
		
		 By (2.12) and (2.13) in \cite{chow},
		$$
		\partial_t\left(Rc(\tilde{g}(x, t)) - \epsilon  R(\tilde{g}(x, t))\tilde{g}(x, t)\right)=(n-1) \Delta\left(Rc(\tilde{g}(x, t)) - \epsilon R(\tilde{g}(x, t))\tilde{g}(x, t)\right)+\frac{1}{n-2} B_{i j}.
		$$
		The eigenvalues of
		tensor $B_{i j}$ in the above are
		$\mu_i=\sum\limits_{k, l \neq i, k>l}\left(\lambda_k-\lambda_l\right)^2+(n-2) \sum\limits_{k \neq i}\left(\lambda_k-\lambda_i\right) \lambda_i$, where $\lambda_i$ is the  eigenvalue of Ricci tensor $Rc(\tilde{g}(x, t))$. Now define
		$$
		f(x, t)=\inf \{s \geq 0:Rc(\tilde{g}(x, t)) - \epsilon  R(\tilde{g}(x, t))\tilde{g}(x, t)+s \tilde{g}(x, t)>0\}.
		$$
	Similar computations as (74)-(75) in \cite{CZ2000}, we get
	\begin{equation}\label{m_estimate_2}
		\begin{aligned}
		\left(\frac{\partial}{\partial t}-(n-1)\Delta_{\tilde{g}(t)}\right)f
				\le&-\sum\limits_{k, l \neq 1, k>l}\left(\lambda_k-\lambda_l\right)^2-(n-2) \sum\limits_{k \neq 1}\left(\lambda_k-\lambda_1\right) \lambda_1\\
			\le& -\sum\limits_{k, l \neq 1, k>l}\left(\lambda_k-\lambda_l\right)^2-(n-2)\left(\frac{1}{\epsilon}-n\right)\lambda_1^2-\frac{n-2}{\epsilon}\lambda_1f\\
		\le&-\frac{n-2}{\epsilon}\lambda_1f \le \frac{L}{t}f	.
		\end{aligned}
	\end{equation}
		for $\epsilon\le \frac{1}{n}$ in the barrier sense,
		where $\lambda_1$ is the smallest eigenvalue of Ricci tensor $Rc(\tilde{g}(x, t))$ and $L$ is a positive constant only depending on $Q$ and $n$.
		
		 We now adapt the idea of Theorem 1.1 in \cite{LT}.  We consider the following function
		$$
		G=-f\psi^m+\eta,
		$$
		where $\eta(t)$ be a smooth positive function in $t$ such that $G>0$ near $t=0$. If $G(x,t)<0$ for some point on $B_{\tilde{g}(0)}(x_0, 1)\times\left[0,16 \Gamma^{-2} Q^{-1}\right]$, then there exists $(x_0,t_0)\in B_{\tilde{g}(0)}(x_0, 1)\times\left[0,16 \Gamma^{-2} Q^{-1}\right]$ such that $G(x_0,t_0)=0$ and $G(x,t)\ge 0$ on $B_{\tilde{g}(0)}(x_0, 1)\times\left[0,t_0\right]$.
		
		For any $\delta>0$, there exists $C^2$ functions $\tilde{f}(x), \tilde{\psi}(x)$ near $x_0$ such that $\tilde{\psi}(x) \leq \psi\left(x, t_0\right), \tilde{\psi}\left(x_0\right)=\psi\left(x_0, t_0\right), \tilde{f}(x) \leq f\left(x, t_0\right)$ and $\tilde{f}\left(x_0\right)=f\left(x_0, t_0\right)$ satisfying
		$$
		\frac{\partial_{-}}{\partial t}\psi\left(x_0, t_0\right)-\Delta_{\tilde{g}(t)} \tilde{\psi}(x_0) \leq \delta
		$$
		and
		$$
		\frac{\partial \_}{\partial t} f\left(x_0, t_0\right)-\Delta_{\tilde{g}(t)} \tilde{f}\left(x_0\right)\leq -\frac{n-2}{\epsilon}\lambda_1f+\delta
		$$
		here for a function $f(x, t)$,
		$$
		\frac{\partial_{-}}{\partial t} f\left(x_0, t_0\right)=\liminf _{h \rightarrow 0^{+}} \frac{f\left(x_0, t_0\right)-f\left(x_0, t_0-h\right)}{h} .
		$$
		Denote
		$$
		\tilde{G}(x,t)=-\tilde{f}(x)\tilde{\psi}(x)^m+\eta(t).
		$$
		Noted that $\tilde{G}(x_0,t_0)=0$ and $\tilde{G}(x,t_0)\ge 0$ for $x$ near $x_0$. It follows that at $(x_0,t_0)$
		\begin{equation}\label{m_estimate_1}
		\begin{aligned}
			0\le \Delta G&= -\tilde{\psi}^m\Delta\tilde{f}-\tilde{f}\Delta\tilde{\psi}^m-2\langle \nabla\tilde{\psi}^m,\nabla \tilde{f}\rangle\\
			&\le \psi^m\left(-\frac{\partial \_}{\partial t} f-\frac{n-2}{\epsilon}\lambda_1f+\delta\right)+mf\psi^{m-1}\left(-\frac{\partial \_}{\partial t} \psi+\delta\right)-2\langle \nabla\tilde{\psi}^m,\nabla \tilde{f}\rangle\\
			&= \frac{\partial \_}{\partial t} G-\eta^{\prime}-\frac{n-2}{\epsilon}\lambda_1f\psi^m+\delta\psi^m+m\delta f\psi^{m-1}-2\langle \nabla\tilde{\psi}^m,\nabla \tilde{f}\rangle\\
			&\le -\eta^{\prime}-\frac{n-2}{\epsilon}\lambda_1f\psi^m+\delta\psi^m+m\delta f\psi^{m-1}+2m^2\eta^{1-\frac{1}{m}}f^{\frac{1}{m}}\frac{|\nabla \psi|^2}{\psi}\\
			&\le -\eta^{\prime}+L\eta t^{-1}+\delta\psi^m+m\delta f\psi^{m-1}+2m^2c_2c_3\eta^{1-\frac{1}{m}}t^{-\frac{1}{m}},
		\end{aligned}			
		\end{equation}
		where we used
		$\nabla \tilde{f}=\frac{-m\tilde{f}\nabla \tilde{\psi}}{\tilde{\psi}}$,
		$|\nabla \tilde{\psi}|\le |\nabla \psi|$, $\eta=f\psi^m$ at $(x_0,t_0)$, $\frac{|\nabla \psi|^2}{\psi}\le c_2$ and $f^{\frac{1}{m}}\le c_3t^{-\frac{1}{m}}$ where $c_2$ is constant depending only on $n$ and  $c_3$ is constant depending only on $Q$ and $n$.

We will eventually take $\eta(t)=t^l$ for any $l\ge \alpha+1$, hence we first need to show
to show $G$ is positive
for $t$ near $0$.	By the second line in (\ref{m_estimate_2}), we get $	\left(\frac{\partial}{\partial t}-(n-1)\Delta_{\tilde{g}(t)}\right)f
\le K_1 f$, where $ K_1=\frac{n-2}{\epsilon}\sup \lambda_1$ near $t=0$. Then  it follows from  this and the fourth line in (\ref{m_estimate_1}),
by letting $\delta \rightarrow 0$, we conclude that at $(x_0, t_0)$,
		\begin{equation}\label{m_estimate_3}
	\begin{aligned}
	\eta^{\prime}\left(t_0\right) & \leq \eta\left(t_0\right)\left( K_1+c_4 m^2\left(\frac{K_2}{\eta\left(t_0\right)}\right)^{\frac{2}{m}}\right),  
		\end{aligned}			
\end{equation}
where $K_2=\sup f$ near $t=0$ and $c_4$ is a positive constant depending on $n$ and $Q$. In the above, we have used that at $\left(x_0, t_0\right)$,
$
\frac{1}{\psi^m}=\frac{f}{\eta} \leq \min \left\{\frac{\alpha}{t_0 \eta\left(t_0\right)}, \frac{a_0}{\eta\left(t_0\right)}\right\} .
$
First, we show that $f(t)=O\left(t^{1 / 2}\right)$. For any $1>\delta>0$, let $\eta(t)=$ $t^{\frac{1}{2}}+\delta$. We get from (\ref{m_estimate_3}) that
$$
\frac{1}{2} t_0^{-\frac{1}{2}} \leq\left(t_0^{\frac{1}{2}}+\delta\right)\left(K_1+\frac{c_4 m^2 K_2^{\frac{2}{m}}}{\left(t_0^{\frac{1}{2}}+\delta\right)^{\frac{2}{m}}}\right) .
$$	
Choose $m=2$, we see that there is $\tau>0$, which is independent of $\delta$, such that $t_0 \geq \tau$. Hence by letting $\delta \rightarrow 0$, we conclude that $f\psi^m \leq 2 t^{\frac{1}{2}}$ near $t=0$.
Next we improve the estimate.  Let $\eta=\delta t^{\frac{1}{4}}+t^k$ for any integer $k \geq 1$ and $\delta>0$. By (\ref{m_estimate_3}) again, we have
$$
\frac{1}{4} \delta t_0^{-\frac{1}{4}}+k t_0^{k-1} \leq\left(\delta t_0^{\frac{1}{4}}+t_0^k\right)\left(K_1+\frac{c_4 m^2 K_2^{\frac{2}{m}}}{\left(\delta t_0^{\frac{1}{4}}+t_0^k\right)^{\frac{2}{m}}}\right) .
$$
Choose $m$ large enough so that $2 k / m<1$, then we can find $\tau_1>0$ such that $t_0>\tau_1$. Therefore, we may conclude that $ f\psi^m  \leq 2 t^k$ near $t=0$.

		Then by letting $\delta\to 0$ in (\ref{m_estimate_1}) we have
		$$
		\eta^{\prime}\le L\eta t^{-1}+2m^2c_2c_3\eta^{1-\frac{1}{m}}t^{-\frac{1}{m}},
		$$
		at $(x_0,t_0)$. Taking $\eta(t)=t^l$, we get
		that
		$$
		lt_0^{l-1}\le Lt_0^{l-1}+2m^2c_2c_3t_0^{l-\frac{l+1}{m}}.
		$$
		Then for $l\ge L+1$, this implies
			that
		$$
		t_0^{l-1}\le 2m^2c_2c_3t_0^{l-\frac{l+1}{m}}.
		$$
		Choose $m$ suffient large such that $\frac{l+1}{m}<1$, we get that  $t_0\ge (2m^2c_2c_3)^{-\frac{m}{m-l-1}}$ and hence
		$$f\psi^m (x_0,t)\le t^l$$
			for $t\in \left[0,t_0\right]$.
		If choose $\Gamma$ such that $\left(16 \Gamma^{-2} Q^{-1}\right)^le^{40m(n-1)\Gamma^{-2}Q^{-1}}\le 1$ and $16\Gamma^{-2}Q^{-1}\le (2m^2c_2c_3)^{-\frac{m}{m-l-1}}$,
		we have
		$$f(x_0,t)\le 1$$
		for $t\in \left[0,16 \Gamma^{-2} Q^{-1}\right]$.
		Rescaling back to $g(t)$, we see that on $B_{g_ 0}\left(p, r_{k+1}+\frac{1}{2} \Gamma \sqrt{Q t_k}\right) \times\left[0, t_k\right]$, we have
		$$
		Rc(g(t)) \geq \epsilon  R(g(t))-\left(\frac{1}{4} \Gamma \sqrt{Q t_k}\right)^{-2},
		$$
		this proves $\mathcal{P}(k+1)$ holds if we choose that $\Gamma$ is larger than a constant depending only $\epsilon$, $n$.

		Since $ r_j$ is monotonely decreasing, we may assume there is $i \in \mathbb{N}$ such that $r_i \geq r+1$ and $r_{i+1}<r+1$. In particular, $\mathcal{P}(i)$ is true since $r_i>0$. We now estimate $t_i$.
		$$
		\begin{aligned}
			r+1>r_{i+1} & =r_1-\Gamma \sqrt{Q} \cdot \sum_{k=1}^i \sqrt{t_k} \\
			& \geq r+3-\Gamma \sqrt{Q t_i} \cdot \sum_{k=0}^{\infty}(1+\mu)^{-k} \\
			& =r+3-\sqrt{t_i} \cdot \frac{\Gamma \sqrt{Q}(1+\mu)}{\mu} .
		\end{aligned}
		$$
		This implies
		$$
		t_i>\frac{4 \mu^2}{Q \Gamma^2(1+\mu)^2}=: T(\epsilon) .
		$$
		In other words, there exists a smooth Yamabe flow solution $g(t)$ defined on $B_{g_0}(p, r+1) \times[0, T]$ so that $g(0)=g_0$ and $| Rm(g(t))| \leq \frac{Q}{t}$.
	\end{proof}

	\section{proof of theorem \ref{main} and theorem \ref{main_key}}\label{proof}
	Now we can give the proof of Theorem \ref{main} and theorem \ref{main_key}.
	
	\textbf{Proof of Theorem \ref{main} and theorem \ref{main_key}:}
We may assume that $\epsilon \in$ $\left(0, \frac{1}{n}\right)$ without loss of generality. Let $r_i \rightarrow+\infty$ and denote $h_{i, 0}=r_i^{-2} g_0$ so that $Rc\left(h_{i, 0}\right) \geq \epsilon \mathrm{R}\left(h_{i, 0}\right)$ on $M$. By Theorem \ref{main1}, there is a locally conformally flat Yamabe flow solution $h_i(t)$ on $B_{h_{i, 0}}(p, 1) \times[0, T]$ with
 $\left| Rm\left(h_i(t)\right)\right| \leq \frac{Q}{t}$
and $Rc\left(h_i(t)\right) \geq \left(\epsilon  R\left(h_i(t)\right)-1\right)h_i(t)$.
Define $g_i(t)=r_i^2 h_i\left(r_i^{-2} t\right)$ which is a Yamabe  flow solution on $B_{g_0}\left(p, r_i\right) \times$ $\left[0, T r_i^2\right]$ with
$$
\left\{\begin{array}{l}
	g_i(0)=g_0 ; \\
	\left| Rm\left(g_i(t)\right)\right| \leq \frac{Q}{t} \\
	Rc\left(g_i(t)\right) \geq \epsilon  R\left(g_i(t)\right)-r_i^{-2}
\end{array}\right.
$$
on  $B_{g_0}\left(p, r_i\right) \times\left(0, T r_i^2\right]$.
By Remark \ref{shi_esimates}, $g_i(t)$ subconverges to a smooth solution of the Yamabe flow on $M \times[0,+\infty)$ such that $g(0)=g_0,| Rm(g(t))| \leq \frac{Q}{t} $ and
$$
Rc(g(t)) \geq \epsilon  R(g(t)) g(t)
$$
for all $ t\in (0,+\infty)$.  By tracing this pinching estimate, we deduce that $ R \geq 0$.  And by applying strong maximum principle to (\ref{scalar}), we get $ R > 0$.

Actually, we now can conclude that Theorem \ref{main} holds by using Theorem 1.1 in \cite{MC} by Ma and the author.
However, since we have already proved the solution of Yamabe flow in our case is of Type III, i.e. $\sup\limits_{M\times [0,\infty)}t| Rm(g(t))| \leq Q $, we now can simplify
the proof in \cite{MC}. We next briefly introduce the proofs here for sake of convenience for the readers.

Notice that in advantange of Ricci flow, only assuming the Ricci curvature is nonnegative, Chow \cite{chow} proved that the following Harnack inequality holds for any smooth vector field $X$
 $$
 Z=\frac{\partial R}{\partial t}+\langle\nabla R, X\rangle+\frac{1}{2(n-1)} R_{i j} X^i X^j+\frac{R}{t} \geq 0
 $$
 for the  locally conformally flat manifolds Yamabe flow.
Taking $X=0$, we get $\frac{\partial}{\partial t}(t R) \geq 0$. Hence we have $A=\limsup\limits_{t \rightarrow \infty} t M(t)>0$, where $ M(t)=\sup\limits_{M} R(\cdot,t)$. Then we can take a sequence $\left(x_i, t_i\right)$ such that $t_i \rightarrow \infty$ and $A_i \doteq t_i R\left(g(x_i, t_i)\right) \rightarrow A$. Define the pointed rescaled solutions $\left(M^n, g_i(t), x_i\right), t \in\left(-t_i Q_i, \infty\right)$, by $g_i(t)=Q_i g\left(t_i+Q_i^{-1} t\right)$, where $Q_i=R\left(g(x_i, t_i)\right)$. For any $\epsilon_1>0$ we can find a time $\tau<\infty$ such that for $t \geq \tau$ and any $x \in M^n$,
 $
 t R(x, t) \leq A+\epsilon_1.
 $
 Then we have
 $
 \sup\limits_{M^n \times\left[-\frac{A_i\left(t_i-\tau\right)}{t_i}, \infty\right)}\left| R(g_i(t))\right|  \leq \frac{A+\epsilon_1}{A_i+t}
 $
  and
 $
 R(g_i(x_i, 0))=1.
 $
 Since the Weyl tensor of $M^n$ is vanishing and Ricci curvature is nonnegative, we get
 $$
 \sup _{M^n \times\left[-\frac{A_i\left(t_i-\tau\right)}{t_i}, \infty\right)}\left| Rm(g_i(t))\right| \leq \frac{A+\epsilon_1}{A_i+t} .
 $$
It follows that
we have a universal $\rho>0$ so that the conjugate radius on $(M^n,g_i(t))$ for $t\in \left[-\frac{A_i\left(t_i-\tau\right)}{t_i}, \infty\right)$ is always larger than $\rho$.
Therefore we can lift $\left(B_{g_i(0)}\left(x_i, \rho\right), g_i(t)\right)$ to $\left(B_{\text {euc }}(o_{x_i},\rho),\phi_{x_i}^* g_i(t)\right)$ by the exponential map of $g_i(0)$ at $x_i$, where $\phi_{x_i}=exp_{x_i,g_i(0)}$.  Then we deduce that $\left(B_{\text {euc }}(o_{x_i},\rho),\phi_{x_i}^* g_i(t)\right)$
subconverges to $(B_{\text {euc }}(o_{x_{\infty}},\rho), g_{\infty}(t) )$ in  $C^{\infty}$-sense.
Hence
$$
R(g_{\infty}(x, t)) \leq \frac{A}{A+t}
$$
for all $(x, t) \in B_{\text {euc }}(o_{x_{\infty}},\rho) \times(-A, \infty)$ and
$$
R(g_{\infty}(o_{x_{\infty}}, 0))=1 .
$$
So $R_{g_{\infty}}$ achieves at its maximum
at $(o_{x_{\infty}}, 0)$. It follows that $(B_{\text {euc }}(o_{x_{\infty}},\rho), g_{\infty}(t) )$ is a gradient expanding Yamabe soliton by the strong maximum principle; see Theorem 4.3 in \cite{MC}. Then we can use a technique used by A. Chau and L.F. Tam \cite{CT}  (see
the proof of Theorem 1.1 in \cite{MC} or the proof of Theorem 2.1 in \cite{CT} ) implies that the injectivity radius of $x_i$ has the uniformly lower bound
with respect to $g_i(0)$. With the injectivity bound, we  conclude that $\left(M^n, g_i(t), x_i\right)$ subconverges to a smooth limit with its universal covering is a complete noncompact gradient expanding Yamabe soliton satisfying $Rc(g_{\infty}(t)) \geq \epsilon  R(g_{\infty}(t)) g_{\infty}(t)>0$, which is a contradiction; see Theorem 4.4 in \cite{MC}.
	$\Box$

\section{Appendix}
In this section we give the proof of the maximum principle which we used in the proof of Lemma \ref{key_lemma}.
The lemma says that for a sequence of curvature flows on the balls  $ B_{\tilde{g}_k(0)}\left(\tilde{x}_k, r_k\right)\Subset M_k$  (possibly incomplete) satisfying
 all derivatives of curvatures are bounded and  the radii $r_k\to \infty$, but no uniform lower bound on the injectivity radius of $\tilde{g}_k(0)$,  we can apply the
 maximum principle on the smooth limit of the flows which obtained by lifting the flows via the exponential maps of $\tilde{g}_k(0)$.

\begin{thm}\label{maximun_principle}
 Suppose, for $t \in$ $[0, T]$ and $0<T<\infty$, that
	 $\{ B_{\tilde{g}_k(0)}\left(\tilde{x}_k, r_k\right), \tilde{g}_k(t),\tilde{x}_k
	\}_{k=1}^{\infty}$ is a sequence of flows with $r_k\to \infty$ for which $B_{\tilde{g}_k(0)}\left(\tilde{x}_k, r_k\right)\Subset M_k$  (possibly incomplete) and satisfy
	\begin{equation}\label{curvature_condition}
	\sup\limits_{B_{\tilde{g}_k(0)}\left(\tilde{x}_k, r_k\right)\times[0, T]}	|\nabla_{\tilde{g}(0)}^p \frac{\partial^q}{\partial t^q}\tilde{g}_k(t)|_{\tilde{g}(0)}\le C_{p,q}
	\end{equation}
	for all $p,q\ge 0$. Denote  $\rho>0$ be the a universal constant such that conjugate radius of $\left(	B_{\tilde{g}_k(0)}\left(\tilde{x}_k, r_k-\rho\right),\tilde{g}_k(t)\right)$ is always larger than $\rho$.
	 For any $r<\infty$ and any $y_k\in  B_{\tilde{g}_k(0)}\left(\tilde{x}_k, r\right)$, we can lift $\left(B_{\tilde{g}_k(0)}\left(y_k, \rho\right), \tilde{g}_k(t)\right)$ to $\left(B_{\text {euc }}(o_{y_k},\rho),\phi_k^{*}\tilde{g}_k(t)\right)$ by the exponential map of $\tilde{g}_k(0)$ at $y_k$ such that $\left(B_{\text {euc }}(o_{y_k},\rho),\phi_k^{*}\tilde{g}_k(t)\right)$ subconverges to $\left(B_{euc}\left(o_{y_{\infty}}, \rho\right), \tilde{g}_{\infty}(t)\right)$ in $C^{\infty}$ sense, where $\phi_k=exp_{y_k,\tilde{g}_k(0)}$
	  and $B_{\text {euc }}(o_{y_k},\rho)$ is the ball in the tangent space at $y_k$.
	Moreover, there exist a sequence of functions $f_{k} \in C^{\infty}(B_{\tilde{g}_k(0)}\left(\tilde{x}_k, r_k\right)\times[0, T], \mathbb{R})$
  satisfy
  \begin{equation}\label{f_bound}
\sup\limits_{B_{\tilde{g}_k(0)}\left(\tilde{x}_k, r_k\right)\times[0, T]}	|\nabla_{\tilde{g}(0)}^p \frac{\partial^q}{\partial t^q}f_k|_{\tilde{g}(0)}\le C_{p,q}
  \end{equation}
  for all $p,q\ge 0$ and $f_k\circ\phi_k$ subconverges to a smooth function $f^{y_{\infty}}_{\infty}$ in $C^{\infty}$-sense on
  $B_{\text {euc }}(o_{y_{\infty}},\rho)\times [0,T]$
  which solves
  \begin{equation}\label{f_equation}
\frac{\partial f^{y_{\infty}}_{\infty}}{\partial t} \leq \Delta f^{y_{\infty}}_{\infty}+\langle X_{y_{\infty}}, \nabla f^{y_{\infty}}_{\infty}\rangle+G(f^{y_{\infty}}_{\infty}, t) ,
  \end{equation}
where $G: \mathbb{R} \times[0, T] \rightarrow \mathbb{R}$ is Lipschitz and $X_{y_{\infty}}(\cdot,t)$ is smooth vector field such that $\sup\limits_{B_{\text {euc }}(o_{y_{\infty}},\rho)\times[0, T]}|X_{y_{\infty}}|$ is bounded by a constant which is independent of $y_{\infty}$ .
	Suppose further that $\psi:[0, T] \rightarrow \mathbb{R}$ solves
	$$
	\begin{cases}\frac{d \psi}{d t} & =G(\psi(t), t) \\ \psi(0) & =\alpha \in \mathbb{R} .\end{cases}
	$$
	If $f_{k}(\cdot, 0) \leq \alpha$, then $f_{\infty}(\cdot, t) \leq \psi(t)$ for all $t \in[0, T]$.
\end{thm}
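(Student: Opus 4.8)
The plan is to reduce the statement to an ordinary (Euclidean, local) maximum principle applied on the limit ball $B_{\mathrm{euc}}(o_{y_\infty},\rho)$, and the only genuinely new point is that the limit is taken along a family of lifts rather than a single complete manifold, so one must argue that the conclusion $f_\infty\le\psi$ survives the limit uniformly in the base point $y_\infty$. First I would establish the convergence claims. From the bound (\ref{curvature_condition}) together with the fact that $\tilde g_k(0)$ has conjugate radius at least $\rho$ on $B_{\tilde g_k(0)}(\tilde x_k,r_k-\rho)$, the pulled-back metrics $\phi_k^*\tilde g_k(t)$ on the fixed Euclidean ball $B_{\mathrm{euc}}(o_{y_k},\rho)$ have all space-time derivatives (with respect to the fixed background $\phi_k^*\tilde g_k(0)$, which itself is uniformly controlled) bounded; hence after identifying tangent spaces with $\mathbb R^n$ by linear isometries, Arzel\`a--Ascoli gives a $C^\infty_{\mathrm{loc}}$ subsequential limit $(B_{\mathrm{euc}}(o_{y_\infty},\rho),\tilde g_\infty(t))$, and the same argument applied to $f_k\circ\phi_k$ using (\ref{f_bound}) produces the limit function $f^{y_\infty}_\infty$. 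The drift term $X_{y_\infty}$ and the reaction term $G$ in (\ref{f_equation}) arise as $C^\infty$, resp.\ Lipschitz, limits of the corresponding coefficients in the $f_k$-equation; the uniform bound on $|X_{y_\infty}|$ is inherited from the uniform curvature bounds and is independent of $y_\infty$ because all estimates were in terms of the universal constants $C_{p,q}$ and $\rho$.

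Next I would prove the comparison $f^{y_\infty}_\infty(\cdot,t)\le\psi(t)$ on $B_{\mathrm{euc}}(o_{y_\infty},\rho)\times[0,T]$. The subtlety is that $B_{\mathrm{euc}}(o_{y_\infty},\rho)$ is a \emph{fixed, bounded} ball with a boundary, so one cannot directly invoke a maximum principle on a complete manifold; this is exactly where one exploits that $r_k\to\infty$ while $r<\infty$ is arbitrary. Fix any $r$ and any sequence $y_k\in B_{\tilde g_k(0)}(\tilde x_k,r)$; since $r_k\to\infty$, for large $k$ the ball $B_{\tilde g_k(0)}(y_k,\rho)$ sits well inside $B_{\tilde g_k(0)}(\tilde x_k,r_k)$. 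I would run the standard Omori--Yau-type argument on the original balls: consider $h_k=f_k-\psi$, which satisfies $(\partial_t-\Delta_{\tilde g_k(t)}-\langle X_k,\nabla\cdot\rangle)h_k\le L_k h_k$ for a reaction Lipschitz constant $L_k\le L$ (from the Lipschitz bound on $G$ and the sign of $G(\psi,t)$), with $h_k(\cdot,0)\le 0$; multiplying by the cutoff/weight $e^{-Lt}$ and using that $f_k$ and all its derivatives are globally bounded, one produces, via the spirit of the Omori--Yau maximum principle (choosing a point where an almost-maximum is attained and where the gradient is small and the Hessian nonpositive up to $\varepsilon$), that $\sup h_k\le C\varepsilon$ on the exhausting part of the ball; letting $\varepsilon\to0$ and then passing to the $C^\infty_{\mathrm{loc}}$ limit on $B_{\mathrm{euc}}(o_{y_\infty},\rho)$ yields $f^{y_\infty}_\infty\le\psi$ there. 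Equivalently, one may argue directly on the limit: since the estimate on $h_k$ holds on arbitrarily large balls with constants independent of the radius, the limiting $f^{y_\infty}_\infty-\psi$ satisfies the differential inequality on $B_{\mathrm{euc}}(o_{y_\infty},\rho)$ and is a $C^\infty_{\mathrm{loc}}$-limit of globally bounded functions, which is enough to run the usual parabolic comparison on the fixed ball because any interior maximum of $h_\infty$ forces $h_\infty\le 0$ by the classical maximum principle while the ``boundary'' is pushed to infinity in the pre-limit picture.

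The main obstacle I expect is the \emph{uniformity in the base point}: the constants controlling the lifts, the drift $X_{y_\infty}$, and the Lipschitz/bound data for $G$ must not degenerate as $y_\infty$ ranges over the (possibly noncompact in the limit) region, and the diagonal extraction must be arranged so that a single subsequence works simultaneously for a countable dense set of base points $\{y^j\}$ and then, by the overlap of the balls $B_{\mathrm{euc}}(o_{y^j},\rho)$ together with the uniform-geometry bounds, patches to a coherent limiting comparison. I would handle this by noting that every estimate above is stated in terms of the fixed data $C_{p,q}$, $\rho$, $L$, $T$ and the fixed radius $r$, none of which depends on $k$ or on the choice of $y_k$; hence a single diagonal subsequence suffices, and the bound on $|X_{y_\infty}|$ and the comparison $f^{y_\infty}_\infty\le\psi$ are automatically uniform. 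The remaining verifications — that the $f_k$-equation's coefficients converge in the asserted norms, and that the Omori--Yau extraction is legitimate given only the stated global bounds — are routine and I would only indicate them.
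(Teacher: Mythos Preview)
Your overall strategy—Omori--Yau near-maximum extraction on the large pre-limit balls, diagonal subsequence over base points, uniform constants—matches the paper in spirit, but there is a genuine gap in how you combine the pre-limit and limit information. You write that $h_k=f_k-\psi$ satisfies $(\partial_t-\Delta_{\tilde g_k(t)}-\langle X_k,\nabla\cdot\rangle)h_k\le L h_k$ on the large balls and then run Omori--Yau there to conclude $\sup h_k\le C\varepsilon$. But the hypotheses of the theorem give \emph{no} differential inequality for $f_k$; the inequality (\ref{f_equation}) is asserted only for the limit function $f^{y_\infty}_\infty$, and only on the fixed ball $B_{\mathrm{euc}}(o_{y_\infty},\rho)$. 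You cannot plug the Omori--Yau point into an equation that is not assumed to hold there. Your fallback (``argue directly on the limit'') runs into the boundary problem you yourself flagged and does not explain how ``pushing the boundary to infinity in the pre-limit'' actually yields a maximum principle on a ball of fixed radius $\rho$.

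The paper's resolution separates the two roles. The Omori--Yau step is carried out on the pre-limit: one maximises $\bar F_{k_j}=F_{k_j}-\epsilon_j\log\bigl(d_{\tilde g_{k_j}(t)}(\tilde x_{k_j},\cdot)+1\bigr)$, with $F_k=f_k-\psi_\eta$ for the $\eta$-perturbed supersolution $\psi_\eta$, which forces an interior maximum at some $(z_j,t_j)$ and yields $|\nabla F_{k_j}|\le\epsilon_j$, $\partial_t F_{k_j}\ge -C\epsilon_j$, $\Delta F_{k_j}\le C\epsilon_j$ there—\emph{no PDE is used at this stage}. One then locates $z_j$ inside one of the covering balls $B_{\tilde g_{k_j}(0)}(y^m_{k_j},\rho/2)$, lifts via the exponential map, and transfers these derivative bounds to a point $(z^j_\infty,t_j)$ in some limit ball $B_{\mathrm{euc}}(o_{y^j_\infty},\rho)$; a separate comparison with a fixed near-supremum reference point establishes $F^{y^j_\infty}_\infty(z^j_\infty,t_j)\to S(T_0)=0$. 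Only now, in the limit where (\ref{f_equation}) is actually available, is the differential inequality evaluated at $(z^j_\infty,t_j)$; the $-\eta$ discrepancy then produces the contradiction as $j\to\infty$. The missing idea in your plan is precisely this transfer: Omori--Yau in the pre-limit furnishes the point together with first- and second-order control, while the PDE is invoked only after passing to the limit where it is assumed.
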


\begin{proof}
By (\ref{curvature_condition}), we have a subsequence of $\{ B_{\tilde{g}_k(0)}\left(\tilde{x}_k,r_k\right), \tilde{g}_k(t),\tilde{x}_k
\}_{k=1}^{\infty}$, we still denote it by $\{ B_{\tilde{g}_k(0)}\left(\tilde{x}_k,r_k\right), \tilde{g}_k(t),\tilde{x}_k
\}_{k=1}^{\infty}$ for simplity,
converges to a complete metric space $(X_{\infty},d_{\infty} (t) , \tilde{x}_{\infty})$ for each $t\in (-\infty, 0]$ in pointed Gromov-Hausdorff distance.  By a standard diagonal argument, we can take points $\{y^m_{\infty}\}^{\infty}_{m=1}\subset X_{\infty}$
such that $\exists y^m_k\in B_{\tilde{g}_k(0)}\left(\tilde{x}_k, r_k\right)$,  $y^1_k=y_k$, $y^m_k\to y^m_{\infty}$ as $k\to \infty$,
 $\left(B_{\text {euc }}(o_{y^m_k},\rho),\phi_{y^m_k}^* \tilde{g}_k(t)\right)$
 converges to $\left(B_{\text {euc }}(o_{y^m_{\infty}},\rho), \tilde{g}_{y^m_{\infty}}(t)\right)$ in $C^{\infty}$-sense for any $m$ and
  $\mathop{\cup}\limits^{\infty}_{m=1}B_{d_{\infty}(t)}(y^m_{\infty},\frac{\rho}{2})=X_{\infty}$,  where $\phi_{y^m_k}=exp_{y^m_k,\tilde{g}_k(0)}$
  and $B_{\text {euc }}(o_{y^m_k},\rho)$ is the ball in the tangent space at $y^m_k$.	

For $\eta>0$, we consider the following ODE  $\phi_{\eta}:[0, T] \rightarrow \mathbb{R}$ solves
\begin{equation}\label{ODE}
	\begin{aligned}
	\begin{cases}\frac{d \psi_{\eta}}{d t} & =G(\psi_{\eta}(t), t)+\eta, \\ \psi_{\eta}(0) & =\alpha+\eta .\end{cases}
	\end{aligned}
\end{equation}
We only
need  prove that $f^{y_{\infty}^m}_{\infty}(\cdot, t) <\psi_{\eta}(t)$ for all $m$, $t \in [0, T]$ and arbitrary $\eta \in (0, \eta_0)$ for some $\eta_0$.
Let $F_k=f_k- \psi_{\eta}$. So $F_k\circ \phi_k$ subconverges to $F^{y_{\infty}^m}_{\infty}$ in $C^{\infty}$-sense on
$B_{\text {euc }}(o_{y^{m}_{\infty}},\rho)\times [0,T]$ for any $m$ with $F^{y_{\infty}^m}_{\infty}=f^{y_{\infty}^m}_{\infty}-\psi_{\eta}$. Let $S(t)=\sup\limits_{\substack{  B_{\text {euc }}(o_{y^m_{\infty}},\rho)\times [0,t]\\m\in \mathbb{N}}} F^{y^m_{\infty}}_{\infty}$. Since $S(0)< 0$, we argue by the contradition and assume $T_0\le T$ be the first time such that $S(T_0)=0$.

 	We will prove Theorem \ref{maximun_principle} by using the method which is in the spirit of Omori-Yau maximum principle.
Now we claim that there exist the sequences
 $y^j_{\infty}$ and $(z^j_{\infty},t_j)\in B_{\text {euc }}(o_{y^j_{\infty}},\rho)\times [0,T_0]$ such that
$$
|\nabla F^{y^j_{\infty}}_{\infty}|(z^j_{\infty},t_j)\le 2\epsilon_j, \frac{\partial}{\partial t} F^{y^j_{\infty}}_{\infty}(z^j_{\infty},t_j)\ge -  2C \epsilon_j, \Delta F^{y^j_{\infty}}_{\infty}(z^j_{\infty},t_j)\le 2C\epsilon_j,
$$
and
$$
F^{y^j_{\infty}}_{\infty}(z^j_{\infty},t_j)\to S(T_0)=0,
$$
where $C$ is positive constant depending only on $C_{p,q}$ and $n$.

By (\ref{f_bound}), we let $M=\sup\limits_{\substack{B_{\tilde{g}_k(0)}\left(\tilde{x}_k, r_k\right)\times[0, T_0]\\ k\in \mathbb{N}}}	|F_k|$.
For $\epsilon_j=\frac{1}{j}$ and $R_j=e^{jM}-1$, there exists $k_j>0$ such that when $k\ge k_j$ we have
\begin{equation}\label{g_estimate}
	\sup\limits_{B_{\text {euc }}(o_{y^m_{\infty}},\rho)\times [0,T_0]}|\partial^p( \phi_{y^m_k}^{*} \tilde{g}_k- \tilde{g}_{y^m_{\infty}})|\le \epsilon_j,
\end{equation}
for $ y_k^{m}\in B_{\tilde{g}_k(t)}\left(\tilde{x}_k, R_j-\rho\right)$ and
$B_{\tilde{g}_k(t)}(\tilde{x}_k,R_j)\subset\mathop{\cup}\limits^{\infty}_{m=1}B_{\tilde{g}_k(t)}(y^m_{k},\frac{\rho}{2})$.
Now we consider
$$
\bar{F}_{k_j}(x,t)=F_{k_j}(x,t)-\epsilon_j\log(r_{k_j}(x,t)+1),
$$
with $r_{k_j}(x,t)=d_{\tilde{g}_{k_j}}(x,t)$. Clearly, $\bar{F}_{k_j}(x,t)\le 0$ on $\left( B_{\tilde{g}_{k_j}(t)} \left(\tilde{x}_{k_j}, r_{k_j}\right)\backslash B_{\tilde{g}_{k_j}(t)}\left(\tilde{x}_{k_j}, R_j\right)\right)\times [0,T_0]$. So $\bar{F}_{k_j}$ attains its maximum at some point $(z_j,t_j)\in B_{\tilde{g}_{k_j}(t)}\left(\tilde{x}_{k_j}, R_j\right)\times (0,T_0]$ . Then we have
$$
|\nabla F_{k_j}|(z_j,t_j)\le \epsilon_j\frac{1}{r_{k_j}(z_j,t_j)+1}\le \epsilon_j,
$$
$$
\frac{\partial}{\partial t} F_{k_j}(z_j,t_j)\ge -\epsilon_j\frac{\frac{\partial}{\partial t} r_{k_j}(z_j,t_j) }{r_{k_j}(z_j,t_j)+1}\ge -C \epsilon_j,
$$
by (\ref{curvature_condition}) and
$$
\Delta F_{k_j}(z_j,t_j)\le \epsilon_j\frac{C}{r_{k_j}(z_j,t_j)+1}\le C\epsilon_j.
$$
by (\ref{curvature_condition}) and Laplacian comparison, where $C$ is positive constant depending only on $C_{p,q}$ and $n$.
It follows by (\ref{g_estimate}) that there exist $y^j_{\infty}$ and $(z^j_{\infty},t_j)\in B_{\text {euc }}(o_{y^j_{\infty}},\rho)\times [0,T_0]$ such that
$$
|\nabla F^{y^j_{\infty}}_{\infty}|(z^j_{\infty},t_j)\le 2\epsilon_j, \frac{\partial}{\partial t} F^{y^j_{\infty}}_{\infty}(z^j_{\infty},t_j)\ge -  2C \epsilon_j
$$
and
$$
\Delta F^{y^j_{\infty}}_{\infty}(z^j_{\infty},t_j)\le 2C\epsilon_j,
$$

 Next we show that $F^{y^j_{\infty}}_{\infty}(z^j_{\infty},t_j)\to S(T_0)=0$. Otherwise, we can take $(\hat{z}_{\infty},t_0)\in B_{\text {euc }}(o_{y^{j_0}_{\infty}},\rho)\times [0,T_0]$ for some $j_0$ and  $\delta>0$ such that
\begin{equation*}
	F^{y^{j_0}_{\infty}}_{\infty}(\hat{z}_{\infty},t_0) \ge F^{y^j_{\infty}}_{\infty}(z^j_{\infty},t_j)+\delta.
\end{equation*}
Denote $L=d_{\infty}(\tilde{x}_{\infty},y^{j_0}_{\infty})$.
Then by (\ref{g_estimate}) we conclude that
there exists $(\hat{z}_{j},t_0)\in B_{\tilde{g}_k(0)}\left(\tilde{x}_{k_j}, R_j\right)\times [0,T]$ such that
\begin{equation}\label{e_1}
	F_{k_j}(\hat{z}_{j},t_0) \ge F_{k_j}(z_{j},t_j)+\delta-C\epsilon_j,
\end{equation}
with $r_{k_j}(\hat{z}_{j}, t_0)\le L+\rho+1$ when $j$ sufficient large. By the definition of $z_j$, we have
$$
F_{k_j}(z_{j},t_j)-\epsilon_j\log(r_{k_j}(z_{j},t_j)+1)=\bar{F}_{k_j}(z_{j},t_j)\ge \bar{F}_{k_j}(\hat{z}_{j},t_0)=F_{k_j}(\hat{z}_{j},t_0)-\epsilon_j\log(r_{k_j}(\hat{z}_{j},t_0)+1).
$$
Hence
$$
F_{k_j}(z_{j},t_j)\ge F_{k_j}(\hat{z}_{j},t_0)-\epsilon_j(L+\rho+1),
$$
which contradicts to (\ref{e_1}) when $j$ sufficient large. This proves the claim. 
Since $F^{y^j_{\infty}}_{\infty}$ satifying
 \begin{equation*}
	\frac{\partial F^{y^j_{\infty}}_{\infty}}{\partial t} \leq \Delta F^{y^j_{\infty}}_{\infty}+\langle X_{y^j_{\infty}}, \nabla F^{y^j_{\infty}}_{\infty}\rangle+G(f^{y^j_{\infty}}_{\infty}, t_j)-G(\psi_{\eta}(t_j), t_j)-\eta 
\end{equation*}
at $(z^j_{\infty},t_j)$, we get the contradiction by the claim when
we take $j\to\infty$.

\end{proof}

\end{document}